\theoremstyle{plain}
 \newtheorem{theorem}{Theorem}[section]
 \newtheorem{lemma}{Lemma}[section]
\theoremstyle{definition}
 \newtheorem{definition}{Definition}[section]
\renewcommand{\le}{\leqslant}
\renewcommand{\ge}{\geqslant}
\title[Complex Iterations and Bounded Analytic Hyper-Operators]{Complex Iterations and Bounded Analytic Hyper-Operators}
\subjclass[2010]{30D05; 26A33; 30E20}
\keywords{Complex Analysis, Gamma Function, Recursion, Complex iterations, Hyper-operators}
\author[James Nixon]{\bfseries James Nixon}
\address{
Toronto\\
Canada}
\email{james.nixon@mail.utoronto.ca}
\newcommand{\G}{\Gamma}
\newcommand{\D}[2]{\frac{d^{#2}}{{d#1}^{#2}}}
\newcommand{\A}{\mathcal{I}}
\newcommand{\up}{\uparrow}
\begin{document}

\vspace{18mm} \setcounter{page}{1} \thispagestyle{empty}

\begin{abstract}
We give a method to solving the problem of iterating holomorphic functions to fractional or complex heights. We construct an auxiliary function from natural iterates of a holomorphic function; the auxiliary function will be differintegrable and the complex derivatives of the auxiliary function are the complex iterates of the original holomorphic function. We use Ramanujan's master theorem as a foundation and apply elementary theorems from complex analysis to arrive at our result. We provide non-trivial examples of holomorphic functions iterated to complex heights using these methods. We derive a closed form expression for what we call bounded analytic hyper-operators $\alpha \up^n x$ defined for $\alpha \in (1,e^{1/e})$, $x\in\mathbb{R}^+$ and $n\ge0$. These hyper-operators share the same recursive structure as the hyper-operators defined on the natural numbers but are instead analytic. They form a sequence of operators beginning with addition, multiplication, and exponentiation. Surprisingly these hyper-operators are bounded by $e$ as they grow on the real line for $n \ge 2$. We maintain an elementary yet very general discussion of the problem, as our solutions are specific instances of more general cases.
\end{abstract}

\maketitle

\section{Introduction}\label{sec1}

\setcounter{section}{1}
\setcounter{equation}{0}\setcounter{theorem}{0}

We begin with a brief explanation of the problem we are examining. The problem we speak of is intuitive and simple to state, but has proven to be a difficult one to solve. It has arisen in many contexts throughout mathematical history in the past hundred years, and many mathematicians have devoted work towards it or similar veins, including Schr$\ddot{o}$der \cite{ref2} and Ramanujan \cite{ref3}. The question arises from a study of the composition operator, holomorphic functions and iterates of a function under composition. The subject of complex dynamics is closely related. 

Taking care to notice that composition between functions behaves less simply than multiplication between numbers, the question can be better understood if we think of composition as multiplication and the number $e$ as a function. We can always find the square root under multiplication, $\sqrt{e}=e^{1/2}$ such that $\sqrt{e}\cdot\sqrt{e} = e$. We can also find $\sqrt[3]{e}=e^{1/3}$ such that $\sqrt[3]{e}\cdot\sqrt[3]{e}\cdot\sqrt[3]{e} = e$. Further, we can construct a holomorphic function $e^z$ where $z$ ``counts'' how many times we iterate multiplication of $e$. We ask, is it possible to do this if we replace multiplication with composition, and $e$ with a function? Can we find a holomorphic function where $z$ ``counts'' how many times we iterate composition of another function? 

To say this more mathematically, consider a holomorphic function $\phi$ sending open $G \to G$. Is it possible to construct a holomorphic function $g:G\to G$ the composite square root of $\phi$, $g  = \sqrt{\phi}$, such that $g(g(\xi)) = \phi(\xi)$? Or is it possible to construct a holomorphic function $h: G\to G$ the composite cube root of $\phi$, $h = \sqrt[3]{\phi}$, such that $h(h(h(\xi))) = \phi(\xi)$? Or in general, any function $f$ the composite $n$'th root, $f = \sqrt[n]{\phi}$, such that $(f \circ f \circ...(n\,times)... \circ f)(\xi) = \phi(\xi)$? The question can be generalized further if we write these functions as composite exponents of $\phi$. Let us say that $g = \phi^{\circ 1/2},\,h = \phi^{\circ 1/3},\,f = \phi^{\circ 1/n}$. Lets presume this concept can be extended to all $z \in \mathbb{C}$ not just $z = 1/k$ for $k$ a natural. Like exponentiation, can we generate a function $\phi^{\circ z}:G \to G$ holomorphic in $z$ on some open $\Omega$ and in $\xi$ on $G$ (finding a holomorphic function $e^z$). This function would satisfy the exponent laws $\phi^{\circ z_1} \circ \phi^{\circ z_2} = \phi^{\circ z_1 + z_2}$ and would equal the original function at $z =1$ ($e^{z_1}\cdot e^{z_2} = e^{z_1 + z_2}$ and $e^1 = e$). Let us appeal to a rigorous definition of a ``complex iteration'' of a holomorphic function.

\begin{definition}\label{df1}
Suppose $G,\Omega \subseteq \mathbb{C}$ are open with $1 \in \Omega$ and $\Omega$ closed under addition. Let the function $\phi(z,\xi):\Omega \times G\to G$ be holomorphic in $z$ and $\xi$. For $z_1, z_2 \in \Omega$, $\phi(z_1, \phi(z_2, \xi)) = \phi(z_1+z_2,\xi)$. We say $\phi(z,\xi)$ is a complex iteration of $\phi(1,\xi)$ on $G$.
\end{definition}

The reader may care to notice this definition is a modified definition of \emph{flow} in engineering and physics. By this we mean to say our concept of complex iteration is a close sister to flow. Flow, however, appears in vector analysis and takes only real exponents of iteration. We have the desire to be able to produce $\phi^{\circ z}$ where $z \in \mathbb{C}$ and not just $\phi^{\circ t}$ for $t \in \mathbb{R}$, which is why we generalize the concept of flow. Also, the term flow usually arises in practical applications of fluid dynamics, and we stray from these real world applications; we write solely in the realm of pure mathematics.

As we've defined a complex iteration of a holomorphic function it is not unique. There may be many different candidate functions which satisfy these properties. We do however have a uniqueness criterion. If a complex iteration can be generated using the methods we will construct, it is the only complex iteration that can be generated in such a way. This is a consequence of Ramanujan's master theorem, or more audaciously Carlson's theorem. This states that any two functions that are appropriately exponentially bounded and equal on the positive integers must equal everywhere. Since we will always be dealing with holomorphic functions bounded as such, these theorems come in use frequently throughout. 

Off-hand the problem is intimidating, composition is an operator of increased complexity in comparison to one like multiplication which follows more convenient and accessible laws. In constrast to this though, we have a strong way of analyzing these problems, and the methods we shall introduce are part of a larger framework of techniques useful in problems of iteration or recursion of holomorphic functions such as this. The main note that shall hilite the generality of these techniques is that a function that interpolates the natural iterates of a function $\phi$ and is exponentially bounded to our specifications is necessarily a complex iteration of $\phi$. This gives way as a surprising result that can be shown with a simple exercise in contour integration and a fast study of the Mellin transform. We will not write these proofs, despite their importance we have condensed them into a summary of what we use.

In the late nineteenth century, Schr$\ddot{o}$der proposed the existence of a type of function that reduced our problem of iteration into a simpler one--the calculation of the Schr$\ddot{o}$der function and its inverse. The Schr$\ddot{o}$der function was defined as an eigenfunction to the linear operator $\mathcal{C}_\phi$, where $\mathcal{C}_\phi f = f \circ \phi$. We will only need a local definition of the Schr$\ddot{o}$der function, for our purposes we reference one version of it from ~\cite{ref2}. It is properly called Koenigs linearization theorem.

\begin{theorem}[Koenig's Linearization Theorem]\label{lma1}
Suppose $\phi(\xi)$ is a holomorphic function on open $G$, for $\xi_0 \in G$ $\phi(\xi_0) = \xi_0$ and $0<|\phi'(\xi_0)|<1$. There exists a neighborhood $U$ about $\xi_0$ and a neighborhood $D$ about zero so that a holomorphic function $\Psi: U \to D$. Further $\Psi(\phi(\xi)) = \phi'(\xi_0) \Psi(\xi)$.
\end{theorem}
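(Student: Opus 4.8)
The plan is to follow the classical Koenigs construction, obtaining $\Psi$ as the locally uniform limit of normalized iterates of $\phi$. First I would reduce to the case $\xi_0 = 0$ by conjugating with the translation $\xi \mapsto \xi - \xi_0$, which alters neither the multiplier $\lambda := \phi'(\xi_0)$ nor the holomorphy, and write $\phi(\xi) = \lambda\xi + \xi^2 h(\xi)$ with $h$ holomorphic near $0$. Since $0<|\lambda|<1$, I can fix a radius $r>0$ so small that $|\phi(\xi)|\le c|\xi|$ on the closed disk $\overline{D_r}$ for a constant $c$ with $|\lambda|<c<1$; iterating, $\phi^{\circ n}$ then maps $\overline{D_r}$ into itself and $|\phi^{\circ n}(\xi)|\le c^n r \to 0$ uniformly.

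The candidate is $\Psi_n(\xi) := \phi^{\circ n}(\xi)/\lambda^n$, and the heart of the argument is to show $(\Psi_n)$ converges uniformly on $D_r$. Writing $w = \phi^{\circ n}(\xi)$, the quadratic vanishing of $\phi(w)-\lambda w$ at the origin gives a bound $|\phi(w)-\lambda w|\le C|w|^2$ on $\overline{D_r}$, so
\[
|\Psi_{n+1}(\xi) - \Psi_n(\xi)| = \frac{|\phi(w) - \lambda w|}{|\lambda|^{n+1}} \le \frac{C r^2}{|\lambda|}\left(\frac{c^2}{|\lambda|}\right)^n.
\]
Here is the single point that truly exploits $0<|\lambda|<1$: since $|\lambda|<\sqrt{|\lambda|}$, I can refine the earlier choice and take $c$ in the nonempty interval $(|\lambda|,\sqrt{|\lambda|})$, which forces $c^2/|\lambda|<1$. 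The telescoping series then converges geometrically and uniformly, so $\Psi_n \to \Psi$ uniformly on $D_r$, and $\Psi$ is holomorphic by the Weierstrass convergence theorem. I expect this estimate—in particular securing $c^2<|\lambda|$—to be the main technical obstacle; the remainder is bookkeeping.

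It then remains to read off the stated properties from the limit. Evaluating at the fixed point gives $\Psi(0) = \lim_n \phi^{\circ n}(0)/\lambda^n = 0$, while the chain rule gives $(\phi^{\circ n})'(0) = \lambda^n$, hence $\Psi_n'(0)=1$ and therefore $\Psi'(0)=1$. Because $\Psi'(0)\ne 0$, the inverse function theorem shows $\Psi$ is a biholomorphism from a neighborhood $U$ of $\xi_0$ onto a neighborhood $D$ of $0$, which is exactly the map $\Psi\colon U\to D$ claimed. Finally the functional equation drops out of an index shift: for $\xi\in D_r$ (so that $\phi(\xi)\in D_r$ automatically),
\[
\Psi(\phi(\xi)) = \lim_n \frac{\phi^{\circ(n+1)}(\xi)}{\lambda^n} = \lambda \lim_n \frac{\phi^{\circ(n+1)}(\xi)}{\lambda^{n+1}} = \lambda\,\Psi(\xi),
\]
which, after undoing the initial translation, completes the proof.
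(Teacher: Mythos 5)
Your proof is correct and complete: it is the classical Koenigs iteration argument, with the one genuinely delicate point (choosing $c\in(|\lambda|,\sqrt{|\lambda|})$ so that $c^2/|\lambda|<1$ and the telescoping series converges geometrically) handled properly, and the functional equation and the biholomorphy of $\Psi$ near the fixed point read off correctly from $\Psi'(0)=1$. Note that the paper itself offers no proof of this statement --- it is quoted from Milnor's \emph{Dynamics in one complex variable} --- and your argument is precisely the standard proof given in that reference, so there is nothing to contrast; the only cosmetic remark is that the order of quantifiers should be stated cleanly from the outset (fix $c$ with $|\lambda|<c<\sqrt{|\lambda|}$ \emph{first}, then shrink $r$ so that $|\phi(\xi)|\le c|\xi|$ on $\overline{D_r}$), which you in effect do when you ``refine the earlier choice.''
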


Although these functions are named for Schr$\ddot{o}$der, he did not produce any general method of retrieving the Schr$\ddot{o}$der function of an arbitrary holomorphic function. The first construction was due to Koenigs \cite{ref2}. In our investigations we will bypass the evaluation and rigorous construction of Schr$\ddot{o}$der functions and will instead use the conditions for existence. We can show that these conditions are sufficient for different evaluation and construction techniques to come into play. We see a definition right away of the complex iteration of $\phi$ using the Schr$\ddot{o}$der function. Define $\phi^{\circ z}(\xi) = \Psi^{-1}(\phi'(\xi_0)^z \Psi(\xi))$. This function formally satisfies our conditions, it is holomorphic and the recursion is satisfied. We cannot be satisfied with this however, despite that it satisfies our conditions. $\Psi$ will only be locally defined and so this definition applies only for $\xi$ in a tiny radius about $\xi_0$, which is quite an unsatisfactory result.

This leads us to our auxiliary function. We will lift the definition of $\phi^{\circ z}(\xi)$ from a tiny radius about $\xi_0$ to a much larger domain by using techniques from analysis. This function will always be denoted with $\vartheta$ and will be holomorphic in two variables. We say $\vartheta(w,\xi):\mathbb{C} \times G \to \mathbb{C}$ where $\vartheta(w,\xi) = \sum_{n=0}^\infty \phi^{\circ n+1}(\xi) \frac{w^n}{n!}$; using the notation $\phi^{\circ n} (\xi) = (\phi \circ \phi \circ\,...(n\,times)...\,\circ \phi)(\xi)$. 

To specify why we invent the function $\vartheta$, it is very similar to the motivation for the Schr$\ddot{o}$der function. It is here that we encode a more general idea. This function satisfies a very important differential equation. The identity $\D{w}{}\vartheta(w,\xi) = \mathcal{C}_\phi \vartheta(w,\xi) = \vartheta(w,\phi(\xi))$. Noticeably, just like how the Schr$\ddot{o}$der function sends composition by $\phi$ into multiplication by $\phi'(0)$; $\vartheta$ takes composition by $\phi$ and gives differentiation by $w$. This is the useful part of $\vartheta$ we care most about. In a more general setting we could encode a different linear operator as opposed to $\mathcal{C}_\phi$ (composition by $\phi$) through a similar auxiliary function, and it will satisfy a similar differential equation.

Noting this property of $\vartheta$, we further see that $\D{w}{k}\Big{|}_{w=0}\vartheta(w,\xi) = \phi^{\circ k+1}(\xi)$. The derivatives about zero are $\phi$'s iterates. We ask then, if we can take the complex derivatives of $\vartheta$ about zero, will this give $\phi$'s complex iterates?  The answer is yes. This requires some work in complex analysis however, especially when we look at the domain $G$ that $\xi$ lives in. In general this will work for complex iterating many more operators than just $\mathcal{C}_\phi$. If $E$ is a linear operator and we construct $\vartheta_E$ from $E$ how we constructed $\vartheta$ from $\mathcal{C}_\phi$ (through its iterates), we will find $\D{w}{s} \vartheta_E= E^s \vartheta_E $ and $E^s E^{s'} \vartheta_E = \D{w}{s} \D{w}{s'} \vartheta_E = \D{w}{s+s'} \vartheta_E = E^{s+s'}\vartheta_E$.

Our techniques will be developed by modifying the differintegral defined as the Riemann-Liouville differintegral operator when the lower limit is set to negative infinity. This operator is a modified Mellin transform, and appears frequently in complex analysis. We will gather some basic theorems on the differintegral. The first of such being a consequence of Ramanujan's master theorem. We also will provide an analytic continuation of our differintegral for suitable holomorphic functions. We will then state the ``factorization'' lemma. This lemma will prove to go very far in evaluating the complex iterates of certain holomorphic functions. The result is derived in complex analysis and requires little mention of the differintegral--however the efficacy of the notation and intuition from fractional calculus proves valuable. We proceed from this by providing an expression for the complex iterates of a holomorphic function by taking the differintegral of $\vartheta$ our auxiliary function. 

We provide non-trivial examples of these iteration methods at work. In doing such we introduce the bounded analytic hyper-operators. These are a sequence of analytic functions that when one of them is iterated it forms the next function in the sequence. They will be bounded and will satisfy a recursive structure isomorphic to the usual hyper-operators defined on the natural numbers. To understand these bounded analytic hyper-operators more intuitively we must first understand hyper-operators on their own. 

Hyper-operators are a sequence of binary operators defined on the natural numbers. In order to construct them, we start with successorship, the first hyper-operator in the sequence. Definitively addition is iterated successorship $a+1+1+...(b\,times)...+1 = a + b: \mathbb{N} \times \mathbb{N} \to \mathbb{N}$. To get the next operator in the sequence, iterate addition giving multiplication $a \cdot b = a + a +a ...(b\,times)...+a : \mathbb{N} \times \mathbb{N} \to \mathbb{N}$. We then iterate multiplication and we get exponentiation $a^b = a \up b = a \cdot a \cdots (b\,times)\cdots a$, the next operator. Iterating this we get tetration $a \up^2 b = a \up a \up \dots (b\,times) \dots \up a$. Iterate tetration to get pentation $a \up^3 b$. So on and so forth. These operators can be formally defined as $a \up^n 1 = a$ and $a \up^{n} (a \up^{n+1} b) = a \up^{n+1} (b+1)$. This can be written more suggestively as $a \up^{n+1} b = a \up^n a \up^n ...(b\,times)...\up^n a$. It is not difficult to see that these operators begin to grow astronomically fast as operations on the natural numbers as we increase $n$. They become impossible to compute as the numbers get so large. It would take a very long time for a computer to even calculate $3 \up^3 3$.

Hyper-operators have been used to prove various properties about the natural numbers. For example they were used by Ackermann to construct the Ackermann function, which was the first constructed function to not be primitive recursive. They were used to induce a hierarchy on the asymptotic rate of growth of primitive recursive functions. Each operator gives way to a new class of functions that grow faster than the last class. They also express a natural recursive continuation to the three operators we hold most important: addition, multiplication and exponentiation.

We shall not look into the natural numbers and hyper-operators--we will only gather our intuition from them. We will construct a sequence of analytic functions that satisfy the same recursive structure as hyper operators. In such a sense we find a solution to $\alpha \up^n x$ for $\alpha \in (1,e^{1/e})$, $x \in \mathbb{R}^+$ and $n \in \mathbb{N}$. This function will satisfy the recursion $\alpha \up^n(\alpha \up^{n+1} x) = \alpha \up^{n+1} (x+1)$; wherein $\alpha \up^0 x = \alpha \cdot x$ and $\alpha \up x = \alpha^x$ are unbounded and $\alpha \up^n x$ is bounded by a number less than or equal to $e$ for $n\ge2$. These functions will be analytically continued from $\alpha \up^n:\mathbb{R}^+ \to (1,e)$ to a function $\alpha \up^n : \mathbb{C}_{\Re(z)>0} \to \mathbb{C}$ but we have no knowledge of where these functions send to; they behave quite chaotically. It is with a slight abuse of notation that we write $\alpha \up^n x$. The up-arrow notation was originally devised by Knuth in \cite{ref4} and defined only on natural numbers. We use this notation solely to express that the nested structure is the same as the usual hyper-operators. In the end we are given a condensed theorem:

\begin{theorem}
Let $1 \le \alpha \le e^{1/e}$ and $n\ge0$. Define the following holomorphic functions recursively for $z,w \in \mathbb{C}$, $\Re(z) > 0$ and $n \in \mathbb{N}$ with $\alpha \up^0 z = \alpha\cdot z$,
\begin{eqnarray*}
\vartheta_n(w) &=& \sum_{k=0}^\infty \big{(}\alpha\up^{n}\alpha\up^{n}...(k+1)\,times...\up^{n}\alpha\big{)} \frac{w^k}{k!}\\
\alpha \up^{n+1} z &=& \D{w}{z-1}\Big{|}_{w=0} \vartheta_n(w)
\end{eqnarray*}

then,
\begin{enumerate}
\item $\alpha \up^{n} :\mathbb{R}^+ \to \mathbb{R}^+$ and $\alpha \up^n (\alpha \up^{n+1} x) = \alpha \up^{n+1}(x+1)$
\item $\alpha \up^{n} x$ is real analytic in $\alpha$ for $1<\alpha < e^{1/e}$
\item $\D{x}{}\alpha \up^{n} x \ge 0$
\item $\alpha \up^n 0^+ = 1$ for $n\ge 1$
\end{enumerate}
\end{theorem}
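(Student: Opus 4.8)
The plan is to argue by induction on $n$, treating the whole construction as a sequence of applications of the general complex-iteration machinery developed earlier in the paper: at each level I want to recognize $\alpha\up^{n+1}z$ as the complex iterate of the holomorphic map $\phi_n := \alpha\up^n$ evaluated at the seed $\alpha$. Indeed the coefficients of $\vartheta_n$ are exactly the natural iterates $\alpha\up^{n+1}(k+1)=\phi_n^{\circ k}(\alpha)$, so $\vartheta_n(w)=\sum_{k\ge0}\phi_n^{\circ k}(\alpha)\,w^k/k!$ and the differintegral picks out $\alpha\up^{n+1}z=\phi_n^{\circ(z-1)}(\alpha)$. The base cases are verified by direct computation: $\alpha\up^0 z=\alpha z$ is given, and since $\phi_0^{\circ k}(\alpha)=\alpha^{k+1}$ one finds $\vartheta_0(w)=\alpha e^{\alpha w}$, whence $\D{w}{z-1}\big|_{w=0}\vartheta_0=\alpha\cdot\alpha^{z-1}=\alpha^z$, recovering $\alpha\up^1 z=\alpha^z$. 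These satisfy (1)--(4) outright, with (4) reading $\alpha^{0}=1$. The inductive burden is to show that each level inherits the hypotheses needed to invoke the earlier theorem at the next level.

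For the inductive step, assume $\phi_n=\alpha\up^n$ is holomorphic on $\{\Re z>0\}$, maps $\mathbb{R}^+$ into $\mathbb{R}^+$, is increasing, and (for $n\ge2$) maps into $(1,e)$. First I would show $\vartheta_n$ is entire of exponential type: its coefficients $\phi_n^{\circ k}(\alpha)$ are bounded (see below), so $|\vartheta_n(w)|\le C e^{|w|}$ and the hypotheses of the differintegral/Ramanujan lemma hold. The earlier theorem then produces a holomorphic $\alpha\up^{n+1}z$ on $\{\Re z>0\}$ that interpolates the iterates, is exponentially bounded, and is therefore the unique such interpolation by Ramanujan's master theorem (or Carlson's theorem). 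The recursion in (1) is then read off the flow property: $\phi_n^{\circ z}(\alpha)=\phi_n\big(\phi_n^{\circ(z-1)}(\alpha)\big)$ gives $\alpha\up^{n+1}(z+1)=\alpha\up^n(\alpha\up^{n+1}z)$ for all $z$ in the half-plane, and in particular on $\mathbb{R}^+$; alternatively one checks it at the integers from the combinatorial definition and upgrades by the uniqueness just cited.

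The main obstacle is the fixed-point and boundedness analysis that keeps the induction alive. I would show the limit $L_n:=\lim_{x\to\infty}\alpha\up^{n+1}x$ exists and is an attracting fixed point of $\phi_n$, since passing to the limit in $\alpha\up^{n+1}(x+1)=\alpha\up^n(\alpha\up^{n+1}x)$ forces $L_n=\alpha\up^n(L_n)$. For $n=1$ this is the fixed point $L_1\in(1,e)$ of $x\mapsto\alpha^x$, which exists precisely because $\alpha\le e^{1/e}$, with multiplier $L_1\ln\alpha\in(0,1]$ (equal to $1$ only at $\alpha=e^{1/e}$); this is exactly the hypothesis $0<|\phi'(\xi_0)|<1$ of Koenig's Linearization Theorem (Theorem \ref{lma1}), so the Schröder function exists and the machinery applies to build $\alpha\up^2$. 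Inductively $\alpha\up^{n+1}$ maps $\mathbb{R}^+$ monotonically into $(1,L_n)\subset(1,e)$ and possesses an attracting fixed point $L_{n+1}<L_n$ with multiplier in $(0,1)$; this simultaneously yields boundedness by $e$ for $n\ge2$ and supplies the Koenig hypothesis needed to build $\alpha\up^{n+2}$. Controlling the multiplier strictly below $1$ at each stage, and confirming the orbit of the seed $\alpha$ lies in the basin of attraction, is the delicate point I expect to absorb most of the work.

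Finally the remaining clauses. For analyticity in $\alpha$ (2), each coefficient $\phi_n^{\circ k}(\alpha)$ is a finite composition of maps real-analytic in $\alpha$ by the inductive hypothesis, so $\vartheta_n$ depends holomorphically on $\alpha$; since the differintegral is a limit of contour integrals converging locally uniformly, $\alpha\up^{n+1}z$ is real-analytic in $\alpha$ on $(1,e^{1/e})$. For monotonicity (3) I would write the real iterate as a flow, $\D{x}{}\alpha\up^{n+1}x=V(\alpha\up^{n+1}x)$ with generator $V(\xi)=\D{t}{}\big|_{t=0}\phi_n^{\circ t}(\xi)$; since the integer orbit increases monotonically toward the attracting fixed point, $\phi_n(\xi)>\xi$ and hence $V>0$ on the relevant interval $(\alpha,L_n)$, giving $\D{x}{}\alpha\up^{n+1}x\ge0$ (this sign bookkeeping is the second point needing care). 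The boundary value (4) then follows formally: since $\alpha\up^{n+1}1=\alpha$ is the single-copy term and $\alpha\up^n 1=\alpha$, letting $x\to0^+$ in the recursion yields $\alpha\up^n(\alpha\up^{n+1}0^+)=\alpha=\alpha\up^n 1$, and injectivity of the increasing map $\alpha\up^n$ gives $\alpha\up^{n+1}0^+=1$ for $n\ge1$.
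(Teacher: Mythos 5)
Your overall strategy---induction on $n$, recognizing $\alpha\up^{n+1}z$ as the complex iterate $\phi_n^{\circ z}(1)$ of $\phi_n=\alpha\up^n$ via the differintegral of $\vartheta_n$, with the fixed point $\omega_\alpha=\lim_{x\to\infty}\alpha\up^{n+1}x$ supplying the Koenigs hypothesis for the next level---is the same as the paper's. But there is a genuine gap at exactly the point you flag as ``the delicate point I expect to absorb most of the work'': you assert that the multiplier $F'(\omega_\alpha)=\D{\xi}{}\big|_{\xi=\omega_\alpha}\alpha\up^n\xi$ lies in $(0,1)$ at every stage, and you give no argument for this. All that the monotonicity and attraction arguments actually yield is $0\le F'(\omega_\alpha)\le 1$, and the degenerate values $0$ and $1$ cannot be excluded; indeed at $\alpha=e^{1/e}$ (which the theorem includes) the multiplier is exactly $1$ already for $n=1$, so the Schr\"{o}der/Koenigs machinery fails outright there and your induction cannot even start at the endpoint. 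The paper's proof devotes a substantial block to precisely this case: it shows the exceptional set $\mathcal{H}=\{\alpha : F'(\omega_\alpha)\in\{0,1\}\}$ is closed and discrete (via analyticity of $\omega_\alpha$ in $\alpha$ and the identity theorem), approximates a bad $\alpha$ by good $a_j\to\alpha$, proves $\vartheta_j\to\vartheta$ uniformly on $\mathbb{C}$, and then invokes Theorem \ref{thmDiff} (the $L_1$-implies-differintegrable criterion) to pass differintegrability to the limit. Without some such approximation or a direct treatment of parabolic fixed points, your induction breaks at every $\alpha$ and $n$ where the multiplier degenerates, and this is not a removable technicality---it is why the paper proves real-analyticity in $\alpha$ (clause (2)) as a load-bearing part of the induction rather than as an afterthought.

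A secondary soft spot: for clause (3) you propose writing $\D{x}{}\alpha\up^{n+1}x=V(\alpha\up^{n+1}x)$ with a generator $V$ and inferring $V>0$ from the monotone integer orbit. The existence of $V$ is fine (the iterate is holomorphic in the exponent), but monotonicity of the orbit at integer times does not by itself force $V\ge0$ on the whole interval---the flow could a priori oscillate within a unit time step. The paper's Lemma \ref{lm11} does real work here, propagating positivity of $\D{\xi}{}\phi^{\circ\delta}$ from a neighbourhood of the fixed point back to $[\kappa,\xi_0)$ via the chain rule applied to $\phi^{\circ n}\circ\phi^{\circ\delta}=\phi^{\circ\delta}\circ\phi^{\circ n}$, and only then deducing $\phi^{\circ\delta}(\xi)>\xi$ for all $\delta>0$. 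You would need to supply an argument of comparable strength.
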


 Bounded analytic hyper-operators are an intellectual curiousity. They exhibit a rather wild recursion and require a nonstandard approach in their solution. We produce its solution to shed light on the types of holomorphic functions we can iterate and the types of problems in recursion we can solve with transforms from fractional calculus. It is also an interesting puzzle to answer: does there exist analytic functions that satisfy the same recursion as hyper-operators on the naturals? The answer is yes and we can produce infinitely many through similar techniques, however instead of drawing a long general theorem we motivate the method with an example. Much like how we motivate iterating linear operators through the iteration of $\mathcal{C}_\phi$.

We extend an eye to the simplicity of the methods we use in totality. Nothing in this paper extends far from an undergraduate's knowledge in mathematics excepting perhaps a few theorems in complex analysis. The fractional calculus we use is so little that the phraseology could have been chosen to not mention it at all. However, we found our motivation for these results in fractional calculus and some of the intuitive ideas are clearer using our differintegral (the modified Mellin transform) rather than just the Mellin transform. We have attempted to be as simple as possible in effort only to display the rather small leaps in logic that allow us these constructions.

\section{Properties of the differintegral}\label{sec2}

\setcounter{section}{2}
\setcounter{equation}{0}\setcounter{theorem}{0}

We provide a brief expository on our differintegral in this section. This differintegral needs little study for our purposes and the properties we use of it are simple and require very little mention of fractional calculus. The following lemmas are valuable assets and can be applied in more general settings. For convenience we will restrict the differintegral to specific entire functions centered about zero. 

We assume the reader is somewhat familiar with the inverse Gamma function. We define it by its product representation, giving us an entire function \cite{ref1}. This function will appear in almost all of our equations and appears as frequently as $\frac{1}{2\pi i}$ does in complex analysis. We need not analyze the inverse Gamma function in depth, it simply appears everywhere in our formulas and normalizes our equations. We take our definition from \cite{ref1}, noting $\gamma = \lim_{n\to\infty} \sum_{j=1}^n\frac{1}{j} - \log(n)$,

$$\frac{1}{\G(z)} = ze^{\gamma z} \prod_{k=1}^\infty (1+\frac{z}{k}) e^{-\frac{z}{k}}$$

We define a restricted form of the differintegral. It is defined this way as to minimalize the amount of words and terms we need add later in the theorem. It makes sense to fix some of the parameters of the usual differintegral so that we have clearer explanations.

\begin{definition}\label{df3}
Let $f(w)$ be entire. For $\sigma \in \mathbb{R}^+\,0<\sigma < 1$ and $\theta,\kappa \in \mathbb{R} \,\, |\theta| < \kappa < \pi/2$ for some $\kappa$ let $\int_0^\infty |f(-e^{i\theta}t)|t^{-\sigma} \,dt < \infty$. Then $f$ is differintegrable and the differintegral centered at zero $\D{w}{-z}\Big{|}_{w=0}$ of $f$ is defined for $0 < \Re(z) < 1$,
\begin{equation}
\label{TWeyl}
\D{w}{-z}\Big{|}_{w=0} f(w) = \frac{e^{i\theta z}}{\Gamma(z)} \int_{0}^\infty f(-e^{i\theta}t)t^{z-1}\,dt
\end{equation}
\end{definition}

We note that $\D{w}{-z}\Big{|}_{w=0} f(w)$ is holomorphic in $z$. This follows because the integral expression $\int_0^n f(-e^{i\theta}t)t^{z-1}\,dt$ uniformly converges in $z$ as $n\to\infty$. We also have 

\begin{equation}\label{eq:2}
|\D{w}{-z}\Big{|}_{w=0} f(w)| < e^{(\pi/2 - \kappa)|\Im(z)|} \Leftrightarrow \int_0^\infty |f(-e^{i\theta}t)|t^{\sigma-1} \,dt < \infty\,\,|\theta| < \kappa
\end{equation}
 which is not difficult to show and is left as an aside.

The following lemma, which in its more general form is commonly referred to as Ramanujan's master theorem, is usually phrased in terms of the Mellin transform, however we shall phrase it in terms of our differintegral. The theorem is presented in its more general form in \cite{ref3}, we will restrict the theorem for our applications.

\begin{theorem}[Ramanujan's Master Theorem]\label{lm1}
Suppose $\phi(z)$ is holomorphic on $\mathbb{C}_{\Re(z) > 0}$. Assume that $|\phi(z)| < C e^{\kappa|\Im(z)| + \rho|\Re(z)|}$ for $\kappa, \rho, C \in \mathbb{R}^+$ and $\kappa < \pi/2$. The function $\vartheta(w) = \sum_{k=0}^\infty \phi(k+1) \frac{w^k}{k!}$ is entire in $w$, differintegrable and $\D{w}{-z}\Big{|}_{w=0} \vartheta(w)=\phi(1-z)$ for $0<\Re(z) < 1$.
\end{theorem}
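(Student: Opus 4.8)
The plan is to establish the three claims—that $\vartheta$ is entire, that it is differintegrable, and that its differintegral returns $\phi(1-z)$—in that order, with the last two both flowing from a single Mellin--Barnes representation of $\vartheta(-t)$. First I would settle entireness: specializing the growth hypothesis to the positive integers gives $|\phi(k+1)| < Ce^{\rho(k+1)}$, so the coefficients $\phi(k+1)/k!$ are dominated by $Ce^{\rho}(e^{\rho})^{k}/k!$, the series converges for every $w$, and one obtains the crude bound $|\vartheta(w)| \le Ce^{\rho}\exp(e^{\rho}|w|)$. This exhibits $\vartheta$ as entire of exponential type, but it grows in $|w|$ and is therefore worthless for the convergence of the defining integral (\ref{TWeyl}); the real content is that along the negative ray $\vartheta(-t)$ secretly \emph{decays} as $t\to\infty$.

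The core step is to prove, for $0 < c < 1$ and $t > 0$, the representation
\[
\vartheta(-t) = \frac{1}{2\pi i}\int_{c - i\infty}^{c + i\infty} \Gamma(s)\,\phi(1-s)\,t^{-s}\,ds, \qquad 0 < c < 1,\ t > 0.
\]
The hypothesis $\kappa < \pi/2$ is exactly what makes this integral converge: on the line $\Re(s) = c$, Stirling gives $|\Gamma(s)| \asymp e^{-\pi|\Im s|/2}$ while the growth bound gives $|\phi(1-s)| < Ce^{\kappa|\Im s| + \rho(1-c)}$, so the integrand decays like $e^{-(\pi/2 - \kappa)|\Im s|}$, the integral converges absolutely, and it is $O(t^{-c})$ as $t\to\infty$. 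To identify this integral with $\vartheta(-t)$ I would, for $0 < t < 1$, push the contour leftward across the simple poles of $\Gamma$ at $s = -k$, where $\mathrm{Res}_{s=-k}\Gamma(s) = (-1)^{k}/k!$; the residues assemble into $\sum_{k}\frac{(-1)^{k}}{k!}\phi(k+1)t^{k} = \vartheta(-t)$, with the connecting arcs controlled because $t^{-s}\to 0$ as $\Re(s)\to-\infty$ for $t<1$. Since both sides are analytic in $t$, the identity persists for all $t > 0$.

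The two remaining claims are then immediate. Choosing $c$ near $1$, the decay $\vartheta(-t) = O(t^{-c})$ together with continuity at the origin makes $\int_{0}^{\infty}|\vartheta(-t)|\,t^{-\sigma}\,dt$ finite for every $\sigma \in (0,1)$, so $\vartheta$ is differintegrable in the sense of Definition \ref{df3} at $\theta = 0$. Reading the representation as a Mellin-inversion pair, uniqueness of the Mellin transform yields $\int_{0}^{\infty}\vartheta(-t)\,t^{z-1}\,dt = \Gamma(z)\,\phi(1-z)$ on the strip $0 < \Re(z) < 1$, which after dividing by $\Gamma(z)$ is precisely $\D{w}{-z}\Big{|}_{w=0}\vartheta(w) = \phi(1-z)$ at $\theta = 0$; independence of the value on the admissible angle $\theta$ then follows by rotating the ray of integration and invoking Cauchy's theorem, legitimate because $\vartheta$ is entire and the sectorial decay survives for the relevant $\theta$.

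The hard part will be the rigorous justification of the Mellin--Barnes representation itself: uniformly bounding the horizontal connecting segments as the contour is driven to $\Re(s)\to-\infty$ so that the residue series genuinely sums to $\vartheta(-t)$, and supplying the hypotheses that license Mellin inversion. Once $\kappa < \pi/2$ has been exploited to guarantee integrability along vertical lines, everything else is bookkeeping.
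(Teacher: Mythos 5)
The paper does not actually prove this statement: Theorem~\ref{lm1} is imported from the literature (the authors explicitly defer to \cite{ref3} and restrict the general form to their setting), so there is no in-paper argument to compare yours against. Your proposal is the classical Hardy-style proof via the Mellin--Barnes representation $\vartheta(-t)=\frac{1}{2\pi i}\int_{c-i\infty}^{c+i\infty}\Gamma(s)\phi(1-s)t^{-s}\,ds$, and the outline is sound: the entireness bound from $|\phi(k+1)|<Ce^{\rho(k+1)}$ is right, the convergence of the Barnes integral rests exactly on $\kappa<\pi/2$ against Stirling's $|\Gamma(c+iv)|\asymp |v|^{c-1/2}e^{-\pi|v|/2}$, the residues at $s=-k$ reassemble the Taylor series for $0<t<1$, and analytic continuation in $t$ plus Mellin inversion gives $\int_0^\infty\vartheta(-t)t^{z-1}\,dt=\Gamma(z)\phi(1-z)$. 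The one point you should make explicit rather than wave at is that differintegrability in the sense of Definition~\ref{df3} demands the bound $\int_0^\infty|\vartheta(-e^{i\theta}t)|t^{-\sigma}\,dt<\infty$ for a whole range of angles $|\theta|<\kappa'<\pi/2$, not just $\theta=0$; this is not a genuine obstruction, because the same Barnes integral represents $\vartheta(-w)$ for $w$ in the sector $|\arg w|<\pi/2-\kappa$ (the factor $|w^{-s}|=|w|^{-c}e^{\arg(w)\Im(s)}$ is absorbed by the $e^{-(\pi/2-\kappa)|\Im s|}$ decay of the integrand), which yields the $O(|w|^{-c})$ decay uniformly on closed subsectors and hence the required integrability for every admissible $\theta$. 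With that supplied, and with the contour-shift bookkeeping you already flag (horizontal segments at fixed left abscissa first, then the left vertical line sent to $-\infty$ using $t^{-s}\to0$ for $t<1$), the proof is complete and is presumably the same argument as in the reference the paper cites.
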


We give an analytic continuation of our differintegral and the Mellin transform. Interestingly this definition gives way to a new representation of the Riemann-Liouville differintegral that is convergent for polynomials--however this is irrelevant to our discussion. This lifting of the differintegral from $-1<\Re(z) <0$ to $-1<\Re(z)$ is very beneficial computationally for us, and eases our methods.

\begin{lemma}\label{lm2}
 Suppose that $\vartheta(w) = \sum_{k=0}^\infty a_k \frac{w^k}{k!}$ is differintegrable, $\D{w}{z}\Big{|}_{w=0} \vartheta(w)$ can be analytically continued for all $\Re(z) > -1$
$$\D{w}{z}\Big{|}_{w=0} \vartheta(w) = \frac{1}{\G(-z)}\Big{(}\sum_{k=0}^\infty a_k \frac{(-1)^k}{k!(k-z)} + \int_1^\infty \vartheta(-w)w^{-z-1}\,dw\Big{)}$$
\end{lemma}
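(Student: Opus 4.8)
The plan is to start from the integral representation of Definition~\ref{df3}, specialized to the ray $\theta=0$ (legitimate since $\theta=0$ satisfies $|\theta|<\kappa$, and in any case the differintegral is independent of $\theta$ in this range by a contour-rotation argument), and rewritten with the sign convention of the lemma. For $-1<\Re(z)<0$ this reads
\[
\D{w}{z}\Big{|}_{w=0}\vartheta(w) = \frac{1}{\G(-z)}\int_0^\infty \vartheta(-w)\,w^{-z-1}\,dw.
\]
I would then split the integral at $w=1$ into a head $\int_0^1$ and a tail $\int_1^\infty$, the aim being to show that the head produces the series while the tail is already holomorphic on the enlarged half-plane.

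For the head, expand $\vartheta(-w)=\sum_{k=0}^\infty a_k\frac{(-1)^k w^k}{k!}$ and integrate term by term; the interchange is justified on $[0,1]$ because the majorant $\sum_k |a_k| w^k/k!$ converges to a bounded continuous function there while $w^{-z-1}$ is integrable near $0$ for $\Re(z)<0$. Since $\int_0^1 w^{k-z-1}\,dw = 1/(k-z)$, the head equals $\sum_{k=0}^\infty a_k\frac{(-1)^k}{k!(k-z)}$. For the tail, increasing $\Re(z)$ only sharpens the decay of the integrand at infinity, so the decay built into the differintegrability hypothesis forces $\int_1^\infty \vartheta(-w)\,w^{-z-1}\,dw$ to converge and be holomorphic on all of $\Re(z)>-1$, not merely on the strip. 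Recombining the two pieces recovers the asserted identity on $-1<\Re(z)<0$.

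The crux is to verify that the right-hand side is holomorphic on the entire half-plane $\Re(z)>-1$, which is what makes it a genuine analytic continuation rather than a formula valid only on the strip. Here I would exploit the pole-zero cancellation between the two Gamma-type factors: the series $\sum_k a_k\frac{(-1)^k}{k!(k-z)}$ is meromorphic with simple poles at the non-negative integers $z=m$, whereas $1/\G(-z)$ has simple zeros precisely there, with $1/\G(-z)\sim(-1)^m m!\,(m-z)$ near $z=m$. The product therefore has removable singularities, and the removable value at $z=m$ comes out to $a_m$, in agreement with $\D{w}{m}\Big{|}_{w=0}\vartheta(w)=a_m$; combined with the entirety of $1/\G(-z)$ and the holomorphy of the tail, the whole expression is holomorphic on $\Re(z)>-1$ and agrees with $\D{w}{z}\Big{|}_{w=0}\vartheta$ on the strip, so the identity theorem finishes the argument. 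The main obstacle I anticipate is the tail estimate: extracting from the differintegrability hypothesis that $\int_1^\infty \vartheta(-w)\,w^{-z-1}\,dw$ converges locally uniformly all the way down to $\Re(z)>-1$ (which needs the integrability to persist as $\sigma\to0^+$) carries the real analytic weight, the pole cancellation being elegant but essentially formal once the residues are matched.
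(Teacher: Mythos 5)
Your proposal is correct and follows essentially the same route as the paper: split the integral at $w=1$, integrate the Taylor series term by term over $[0,1]$ to produce the meromorphic series, let the zeros of $1/\G(-z)$ cancel its poles at the non-negative integers, and observe that the tail integral is holomorphic on the enlarged half-plane. Your explicit residue check that the removable value at $z=m$ is $a_m$, and your derivation of the tail's convergence directly from the integrability hypothesis (rather than the paper's asserted pointwise bound $|\vartheta(-e^{i\theta}w)|<M/w$), are minor refinements of the same argument.
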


\begin{proof}

Break the differintegral into parts by taking $\int_0^\infty = \int_0^1 + \int_1^\infty$.  Then we know that:
\begin{eqnarray*}
\int_0^1 \vartheta(-w)w^{z-1}\,dw &=& \sum_{k=0}^\infty a_k\frac{(-1)^k}{k!} \int_0^1 w^{k+z-1}\,dw\\
&=& \sum_{k=0}^\infty a_k\frac{(-1)^k}{k!(z+k)}
\end{eqnarray*}

The above steps are justified; $\vartheta$'s Taylor series has uniform convergence on all of $\mathbb{C}$ and so the integral can be taken through the sum. The series in $z$ is uniformly convergent on compact subsets of $\mathbb{C}_{\Re(z)<1}/\{0,-1,-2,-3,...,\}$ and defines a meromorphic function there. This is a quick exercise. This expression is holomorphic when we multiply it by the inverse $\G$ function; the simple zeroes of $1 / \G$ occur where the simple poles of $\int_0^1 f(-w) w^{z-1}\,dw$ occur. This implies that $\frac{1}{\G(z)}\int_0^1 f(-w)w^{z-1}\,dw = \frac{1}{\G(z)}\sum_{n=0}^\infty a_n\frac{(-1)^n}{n!(z+n)}$  is holomorphic for all $\Re(z) < 1$.

Observing $\int_1^\infty$ we know that $|\vartheta(-e^{i\theta}w)| < M/w$ for some constant $M\in \mathbb{R}^+$ with $w>1$. Take $N$ big enough so that for $n>N$ we have $\Re(z) < \sigma<1$, $\int_n^\infty w^{\sigma-2}\,dw < \epsilon/M$. Then $|\int_1^n \vartheta(-e^{i\theta}w)w^{z-1}\,dw - \int_1^\infty \vartheta(-e^{i\theta}w)w^{z-1}\,dw| < \int_n^\infty |\vartheta(-e^{i\theta}w)|w^{\sigma-1}\,dw < M \int_n^\infty w^{\sigma-2}\,dw < \epsilon$. This shows uniform convergence and hence the result.
\end{proof}

Combining these lemmas we have a factorization of certain holomorphic $\phi$ over its values on $\mathbb{N}$. This is the result in complex analysis we will make use of most. It plays a surprising role in producing the complex iteration of certain functions, and poses the most questions about generalizations. It is applicable in many other areas, and expresses the more general character of problems addressed in this paper.

\begin{lemma}\label{lm3}
Suppose $\phi(z)$ is holomorphic on $\mathbb{C}_{\Re(z)>0}$. Assume that $|\phi(z)| < C e^{\kappa|\Im(z)| + \rho|\Re(z)|}$ for $\kappa, \rho, C \in \mathbb{R}^+$ and $\kappa < \pi/2$, then $$\phi(z) = \D{w}{z-1}\Big{|}_{w=0} \sum_{k=0}^\infty \phi(k+1) \frac{w^k}{k!}$$ or written explicitly $$\phi(z) = \frac{1}{\G(1-z)}\Big{(}\sum_{k=0}^\infty \phi(k+1) \frac{(-1)^k}{k!(k+1-z)} + \int_1^\infty \big{(}\sum_{k=0}^\infty \phi(k+1) \frac{(-w)^k}{k!}\big{)}w^{-z}\,dw\Big{)}$$.
\end{lemma}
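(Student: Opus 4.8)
The plan is to obtain this lemma by concatenating Theorem \ref{lm1} (Ramanujan's Master Theorem) with the analytic continuation of Lemma \ref{lm2}, and then gluing the two together with the identity theorem for holomorphic functions. Write $\vartheta(w) = \sum_{k=0}^\infty \phi(k+1)\frac{w^k}{k!}$. The exponential bound $|\phi(z)| < Ce^{\kappa|\Im(z)|+\rho|\Re(z)|}$ with $\kappa < \pi/2$ is precisely the hypothesis of Theorem \ref{lm1}, so $\vartheta$ is entire and differintegrable, and Ramanujan's Master Theorem gives $\D{w}{-z}\big|_{w=0}\vartheta(w) = \phi(1-z)$ on the strip $0 < \Re(z) < 1$. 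Substituting $z \mapsto 1-z$ rewrites this as $\D{w}{z-1}\big|_{w=0}\vartheta(w) = \phi(z)$, valid for $0 < \Re(z) < 1$. This already establishes the compact (implicit) form of the claimed identity, but only on that substrip, where the differintegral of Definition \ref{df3} converges in the ordinary sense (indeed $\D{w}{z-1} = \D{w}{-s}$ with $s = 1-z$, and $0<\Re(s)<1$ is exactly $0 < \Re(z) < 1$).

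Second, I would invoke Lemma \ref{lm2} with $a_k = \phi(k+1)$ to push the differintegral past the strip. Lemma \ref{lm2} continues $\D{w}{z}\big|_{w=0}\vartheta(w)$ holomorphically to all $\Re(z) > -1$; applying it with the argument shifted by one (that is, to $\D{w}{z-1}$) gives holomorphy on all $\Re(z) > 0$ and, after the elementary simplifications $k-(z-1)=k+1-z$, $-(z-1)-1 = -z$, and $\frac{1}{\G(-(z-1))} = \frac{1}{\G(1-z)}$, together with $\vartheta(-w) = \sum_{k=0}^\infty \phi(k+1)\frac{(-w)^k}{k!}$, reproduces verbatim the explicit right-hand side in the statement. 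Thus the explicit expression defines a single holomorphic function $F(z)$ on $\mathbb{C}_{\Re(z)>0}$ that continues the convergent differintegral from the strip.

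Finally I would close the argument with the identity theorem. Both $\phi$ and $F$ are holomorphic on the connected domain $\mathbb{C}_{\Re(z)>0}$ — $\phi$ by hypothesis and $F$ by Lemma \ref{lm2} — and by the first step they coincide on the nonempty open substrip $0 < \Re(z) < 1$. Since that substrip accumulates inside the connected half-plane, we get $F \equiv \phi$ throughout $\mathbb{C}_{\Re(z)>0}$, which is exactly the assertion in both its implicit and explicit forms.

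The content here is genuinely a matter of bookkeeping rather than a new estimate: Theorem \ref{lm1} and Lemma \ref{lm2} do all the analytic work. The one place I would be careful — and what I expect to be the only real obstacle — is matching the regions of validity under the variable shift, namely confirming that $z \mapsto z-1$ carries the strip $0 < \Re(z) < 1$ where Ramanujan applies into the region $\Re(z-1) > -1$ where Lemma \ref{lm2} applies, and that $\phi$ is holomorphic on the \emph{whole} half-plane $\Re(z) > 0$ so that the identity theorem has a connected domain on which to act. Given the stated hypotheses both conditions are immediate, so no essential difficulty remains beyond careful indexing.
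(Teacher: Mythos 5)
Your proposal is correct and is exactly the route the paper intends: Lemma \ref{lm3} is stated there without a written proof, introduced only by the remark that it follows from ``combining these lemmas,'' i.e.\ Ramanujan's Master Theorem (Theorem \ref{lm1}) on the strip $0<\Re(z)<1$ followed by the analytic continuation of Lemma \ref{lm2} and the identity theorem. Your version simply makes the substitution $z\mapsto 1-z$ and the region-matching explicit, which is the only bookkeeping the paper leaves to the reader.
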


We add that this lemma implies functions $f,g$ defined on the right half plane, bounded as $\phi$ is in Lemma \ref{lm3}, with $f(n) = g(n)$ for $n \in \mathbb{N}\,\,n\ge 1$ satisfy $f(z) = g(z)$ for all $\Re(z) > 0$. Let us call functions such as $\phi$ stated in Lemma \ref{lm3} \emph{factorable functions} to specify what we mean more clearly. 

Finally a theorem on the difference between differintegrable functions and functions that are simply in $L_1$. The difference is striking and describes the important inclusion of the nasty term $e^{i\theta}$ in our definition of differintegrability.

\begin{theorem}\label{thmDiff}
Let $\vartheta(w)$ be an entire function. If $|\int_0^\infty \vartheta(-e^{i\theta} w)\,dw| < \infty$ for $|\theta| < \kappa < \pi/2$ then $\vartheta$ is differintegrable.
\end{theorem}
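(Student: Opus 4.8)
The plan is to verify the definition of differintegrability directly, namely to produce a single half-angle $\kappa' < \pi/2$ and an exponent $\sigma \in (0,1)$ for which $\int_0^\infty |\vartheta(-e^{i\theta}w)| w^{\sigma-1}\,dw < \infty$ whenever $|\theta| < \kappa'$. First I would dispose of the contribution near the origin: since $\vartheta$ is entire it is bounded on the compact set $\{-e^{i\theta}w : 0\le w \le 1,\ |\theta|\le\kappa'\}$, so $\int_0^1 |\vartheta(-e^{i\theta}w)|w^{\sigma-1}\,dw < \infty$ for every $\sigma>0$. The entire content is therefore the tail $\int_1^\infty$, and since $w^{\sigma-1}\le 1$ there (as $\sigma<1$), it suffices to prove that $\vartheta$ decays at a polynomial rate along the rays: $|\vartheta(-e^{i\theta}w)| = O(w^{-\beta})$ for some fixed $\beta>0$, uniformly for $|\theta|\le\kappa' < \kappa$. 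Indeed, any such $\beta>0$ lets me choose $\sigma\in(0,1)$ with $\sigma<\beta$, whence $\int_1^\infty w^{-\beta}w^{\sigma-1}\,dw<\infty$. The point of this reduction is that the hypothesis controls only a \emph{conditionally} convergent integral (the absolute value sits outside), so the work is to upgrade conditional convergence on a whole cone of rays into genuine decay.

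The bridge I would use is the entire antiderivative. Let $\Psi$ be an antiderivative of $\vartheta$ normalized by $\Psi(0)=0$. Since $\tfrac{d}{dw}\Psi(-e^{i\theta}w) = -e^{i\theta}\vartheta(-e^{i\theta}w)$, one gets $\int_0^R \vartheta(-e^{i\theta}w)\,dw = -e^{-i\theta}\Psi(-e^{i\theta}R)$, so the hypothesis $|\int_0^\infty \vartheta(-e^{i\theta}w)\,dw|<\infty$ is exactly equivalent to the statement that $\Psi$ possesses a finite radial limit $A(\theta):=\lim_{R\to\infty}\Psi(-e^{i\theta}R)$ along every ray of the sector $\mathcal S_\kappa = \{u : |\arg u - \pi| < \kappa\}$. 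Because the partial integrals converge, they are bounded in $R$, so $\Psi$ is in fact bounded \emph{along each individual ray} of $\mathcal S_\kappa$.

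The heart of the argument is to promote this ray-by-ray boundedness to uniform boundedness of $\Psi$ on a sub-sector $\mathcal S_{\kappa'}$, and this is where I expect the main obstacle. The clean mechanism is a rescaling-plus-Vitali argument: set $g_R(\zeta) = \Psi(R\zeta)$ on the annular sub-sector $\{\zeta : |\arg\zeta - \pi|\le\kappa',\ \tfrac12\le|\zeta|\le 2\}$. For each fixed $\zeta = \rho e^{i(\pi+\theta)}$ one has $g_R(\zeta) = \Psi(-e^{i\theta}R\rho)\to A(\theta)$, so $g_R$ converges pointwise to a function of $\arg\zeta$ \emph{alone}. Were the family $\{g_R\}$ locally bounded, Vitali's theorem would force locally uniform convergence to a holomorphic limit; but a holomorphic function depending only on the argument is constant, so $A$ would be constant and $\Psi$ would stabilize — in particular be bounded — on $\mathcal S_{\kappa'}$. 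The difficulty is precisely establishing the local boundedness needed to launch Vitali, since pointwise radial limits do not automatically upgrade to uniform control and invoking normality is circular unless the bound is obtained first. Resolving this must genuinely exploit holomorphy \emph{across the whole cone} — either through Cauchy's theorem linking neighboring rays by the sector contour (whose arc integrals are controlled by the differences $A(\theta_2)-A(\theta_1)$) or through a Phragmén--Lindel\"of estimate. The cone hypothesis is indispensable here: it excludes oscillatory conditionally convergent functions such as $u\mapsto \sin(u)/u$, which is integrable on the single ray $\theta=0$ yet blows up exponentially on every neighboring ray and so fails the hypothesis outright. This is exactly the role played by the ``nasty'' factor $e^{i\theta}$ in Definition~\ref{df3}.

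Once $\Psi$ is bounded by some $M$ on $\mathcal S_{\kappa'}$, the finish is immediate. For $u$ on a ray interior to $\mathcal S_{\kappa'}$ with angular margin $\delta$ to the boundary, the disk $D(u, c|u|)$ with $c = \sin\delta$ lies inside $\mathcal S_{\kappa'}$, so Cauchy's derivative estimate for $\vartheta = \Psi'$ gives $|\vartheta(u)| \le M/(c|u|)$, i.e. $|\vartheta(-e^{i\theta}w)| = O(1/w)$ uniformly for $|\theta|\le\kappa''<\kappa'$. This yields decay with $\beta = 1$, so by the reduction above $\int_1^\infty |\vartheta(-e^{i\theta}w)|w^{\sigma-1}\,dw \le C\int_1^\infty w^{\sigma-2}\,dw < \infty$ for every $\sigma\in(0,1)$; combined with the finite contribution near the origin, this shows $\vartheta$ satisfies the condition of Definition~\ref{df3} with half-angle $\kappa''$, and hence $\vartheta$ is differintegrable.
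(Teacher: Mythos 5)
Your reduction is sound and your finish is correct: if the antiderivative $\Psi$ (normalized by $\Psi(0)=0$) were known to be bounded on a closed subsector $\mathcal{S}_{\kappa'}$, the Cauchy estimate on disks $D(u,|u|\sin\delta)$ would indeed give $|\vartheta(u)|=O(1/|u|)$ on a slightly smaller subsector, and $\int_1^\infty w^{\sigma-2}\,dw<\infty$ for every $\sigma\in(0,1)$; the near-origin contribution is also handled correctly. But the argument has a hole exactly where you flag one, and it is not removable by the devices you sketch: the passage from boundedness of $\Psi$ along each individual ray to uniform boundedness on a subsector is the entire content of the theorem, and you never prove it. Vitali's theorem takes local boundedness of $\{g_R\}$ as an \emph{input}; pointwise convergence of $g_R(\zeta)=\Psi(R\zeta)$ on the annular subsector gives nothing in that direction, since a family of holomorphic functions can converge pointwise while being unbounded on every compact set. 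Your two proposed rescues do not close the gap either. The sector-contour identity is a tautology: the arc integral of $\vartheta$ at radius $R$ between angles $\theta_1$ and $\theta_2$ \emph{equals} $\Psi(Re^{i(\pi+\theta_2)})-\Psi(Re^{i(\pi+\theta_1)})$, so controlling it uniformly is the same problem as controlling $\Psi$ on the arcs. And Phragm\'en--Lindel\"of requires an a priori growth bound on $\Psi$ in the sector, which the bare hypothesis ``$\vartheta$ entire'' does not supply; this is precisely the delicate point, since there exist entire functions with a finite limit along every ray that are nonetheless unbounded, so no argument using only ray-by-ray information can succeed.

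For contrast, the paper does not go through the antiderivative at all. It integrates the damped function $e^{-\tau w}\vartheta(-w)$ around the boundary of the sector of aperture $\theta$ and radius $R$, lets $R\to\infty$ and then $\tau\to 0^{+}$ to identify the integrals along the two bounding rays, and from the resulting vanishing of the arc contribution $\int_0^\theta \vartheta(-Re^{it})\,iRe^{it}\,dt$ infers the decay $\vartheta(-e^{i\theta}w)w\to 0$ and hence differintegrability. Whatever one makes of that final inference, the operative mechanism there is the damped contour integral linking neighboring rays, not a normal-families argument; your proposal would need to actually supply the missing uniform bound on $\Psi$ (or adopt some such contour device) before it constitutes a proof.
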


\begin{proof}
Take the contour $C_R$ which is the boundary of the set $S_{\theta,R} = \{w \in \mathbb{C} \, : \,0 < \arg(w) < \theta \, |w| < R\}$ which we allow to include $0$. Let $\tau > 0$. Observe that:

\begin{eqnarray*}
0 &=& \int_{C_R} e^{-\tau w}\vartheta(-w)\,dw\\
&=& \int_0^R e^{-\tau w} \vartheta(-w)\,dw - e^{i\theta}\int_0^R e^{-\tau e^{i\theta} w}\vartheta(-e^{i\theta}w)\,dw - \int_{0}^{\theta} e^{-\tau Re^{it}}\vartheta(-Re^{it})(iRe^{it})\,dt\\
\end{eqnarray*}

It follows that as $R \to \infty$ the last term tends to $0$ and:

$$\int_0^\infty e^{-\tau w} \vartheta(-w)\,dw = e^{i\theta}\int_0^\infty e^{-\tau e^{i\theta} w}\vartheta(-e^{i\theta}w)\,dw$$

Letting $\tau \to 0$ this implies:

$$\int_0^\infty \vartheta(-w)\,dw = e^{i\theta}\int_0^\infty \vartheta(-e^{i\theta}w)\,dw$$

which implies since 

$$0 = \int_0^R\vartheta(-w)\,dw - e^{i\theta}\int_0^R\vartheta(-e^{i\theta}w)\,dw - \int_{0}^{\theta} \vartheta(-Re^{it})(iRe^{it})\,dt$$

as the limit is taken, $R \to \infty$, we must have:

$$\int_{0}^{\theta} \vartheta(-Re^{it})(iRe^{it})\,dt \to 0$$

All in all this is a sufficient condition for $\vartheta(-e^{i\theta}w)w \to 0$ as $w \to \infty$ as the integral $\int_{0}^{\theta}\vartheta = 0$ for $0<\theta < \kappa$. Equally so this result can be shown for $-\kappa < \theta < 0$.

In total this is a sufficient condition for:

$$\int_0^\infty |\vartheta(-e^{i\theta}t)|t^{-\sigma}\,dt < \infty$$

when $0 < \sigma < 1$
\end{proof}

With these results in complex analysis we are prepared to solve the problem of producing a complex iteration of a holomorphic function.

\section{Complex iterations of holomorphic functions}\label{sec3}

\setcounter{section}{3}
\setcounter{equation}{0}\setcounter{theorem}{0}

We start with the lemma that was given in the first section. We will use the existence of a Schr$\ddot{o}$der function to define a complex iteration of certain $\phi$ in a tiny neighbourhood of a fixed point. This method will only work for fixed points whose multiplier (the functions derivative at the fixed point) is real positive and between zero and one. We then proceed by \emph{factoring} this function. This will give us an expression for the complex iterations in a tiny area about the fixed point, but we will have no mention of the Schr$\ddot{o}$der function. We only needed its existence to produce the result, computationally everything works out independent of the evaluation of the Schr$\ddot{o}$der function.

The techniques we use will apply on more functions than these, but it will stray off topic from our goal of bounded analytic hyper-operators. There is nothing complicated about the following lemma, however it plays a deceptively important role. In fact if we were guaranteed this lemma on more linear operators than just $\mathcal{C}_\phi$ then this entire section could be shown on a much broader scale.
 
\begin{lemma}\label{lma4}
Let $\phi$ be holomorphic on open $G$. Assume $\phi$ fixes the point $\xi_0$ and that $0<\phi'(\xi_0) <1$. There exists $B\subseteq G$, simply connected and open, with $\xi_0 \in B$ such that the complex iteration $\phi^{\circ z}(\xi) : \mathbb{C}_{\Re(z) > 0} \times B \to B$, where
$$\phi^{\circ z} (\xi) = \D{w}{z-1}\Big{|}_{w=0} \sum_{n=0}^\infty \phi^{\circ n+1}(\xi) \frac{w^n}{n!}$$
\end{lemma}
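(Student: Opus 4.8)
The plan is to build the complex iteration of $\phi$ locally from the Schr\"oder function and then use Lemma \ref{lm3} to recognize the factored expression as the differintegral of the auxiliary series. First I would invoke Koenig's Linearization Theorem (Theorem \ref{lma1}): since $\phi$ fixes $\xi_0$ with $0<\phi'(\xi_0)<1$, there is a neighborhood $U$ of $\xi_0$ and a holomorphic $\Psi:U\to D$ linearizing $\phi$, i.e. $\Psi(\phi(\xi)) = \phi'(\xi_0)\Psi(\xi)$. Writing $\lambda = \phi'(\xi_0) \in (0,1)$, I define the formal complex iteration
$$\phi^{\circ z}(\xi) = \Psi^{-1}\big(\lambda^{z}\Psi(\xi)\big),$$
valid for $\xi$ in a small set and $z$ in the right half-plane (note $\lambda^z$ is well-defined and single-valued because $\lambda>0$). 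One checks directly that this satisfies $\phi^{\circ z_1}\circ\phi^{\circ z_2} = \phi^{\circ z_1+z_2}$ and agrees with the natural iterates $\phi^{\circ n}$ at $z=n$, and that $\phi^{\circ 0}$ is the identity. The key point is that $|\lambda^z| = \lambda^{\Re(z)}$ stays bounded for $\Re(z)>0$, so $\lambda^z\Psi(\xi)$ stays inside $D$ and $\Psi^{-1}$ can be applied.

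Next I would verify the hypotheses of Lemma \ref{lm3} for the function $z \mapsto \phi^{\circ z}(\xi)$ at fixed $\xi$. This is the crucial step. I need to exhibit a simply connected open $B\ni\xi_0$, contained in $U$, such that for each $\xi\in B$ the function $\phi^{\circ z}(\xi)$ is holomorphic on $\mathbb{C}_{\Re(z)>0}$ and satisfies the bound $|\phi^{\circ z}(\xi)| < Ce^{\kappa|\Im(z)| + \rho|\Re(z)|}$ with $\kappa<\pi/2$. Since $\lambda^z = e^{z\log\lambda}$ with $\log\lambda$ real and negative, I have $|\lambda^z\Psi(\xi)| = \lambda^{\Re(z)}|\Psi(\xi)| \le |\Psi(\xi)|$ for $\Re(z)>0$; crucially this has \emph{no} exponential growth in $\Im(z)$ at all, so $\lambda^z\Psi(\xi)$ is confined to a fixed compact disc $\overline{D'}\subset D$ as $z$ ranges over the right half-plane. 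Choosing $B = \Psi^{-1}(D')$ for a suitably small disc $D'$ about $0$ with $\overline{D'}\subset D$, the composite $\Psi^{-1}(\lambda^z\Psi(\xi))$ takes values in a compact subset of $U$ and is therefore uniformly bounded, which gives the required bound with $\kappa$ as small as we like (in particular $\kappa<\pi/2$). Thus $z\mapsto\phi^{\circ z}(\xi)$ is a factorable function for each $\xi\in B$.

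Having checked the hypotheses, Lemma \ref{lm3} applies and yields, for each fixed $\xi\in B$,
$$\phi^{\circ z}(\xi) = \D{w}{z-1}\Big{|}_{w=0}\sum_{n=0}^\infty \phi^{\circ n+1}(\xi)\frac{w^n}{n!},$$
which is exactly the formula in the statement; the Taylor coefficients $\phi^{\circ n+1}(\xi)$ in the series are the values of the factorable function at the integers $n+1$, as required by the lemma. It remains only to confirm that this expression indeed maps $\mathbb{C}_{\Re(z)>0}\times B\to B$ and is holomorphic in both variables. Holomorphy in $z$ comes from Definition \ref{df3} together with the continuation in Lemma \ref{lm2}; holomorphy in $\xi$ and joint holomorphy follow from uniform convergence of the defining integral on compact subsets, differentiating under the integral sign. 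That the image lands in $B$ rather than merely in $G$ follows from the Schr\"oder representation, since $\lambda^z\Psi(\xi)\in D'$ gives $\phi^{\circ z}(\xi)\in B$.

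The main obstacle I anticipate is establishing the $\Im(z)$-direction bound required by Lemma \ref{lm3} uniformly in $\xi$, and keeping the image inside $B$ rather than escaping $U$; the resolution is the observation that $\lambda$ is \emph{real} and positive, so $\lambda^z$ contributes no oscillatory growth and $\lambda^z\Psi(\xi)$ is trapped in a compact subdisc of $D$. This is precisely why the hypothesis $0<\phi'(\xi_0)<1$ (multiplier real, not merely $|\lambda|<1$) is imposed, and it is what makes the factorability bound $\kappa<\pi/2$ hold automatically with room to spare.
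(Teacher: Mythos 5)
Your proposal is correct and follows essentially the same route as the paper: construct $\phi^{\circ z}(\xi)=\Psi^{-1}(\lambda^{z}\Psi(\xi))$ via Koenigs linearization, take $B$ to be the preimage of a small disc so the image stays inside $B$, observe that $\lambda>0$ real makes $\phi^{\circ z}$ bounded in both the real and imaginary directions of $z$, and then apply the factorization lemma (Lemma \ref{lm3}) to obtain the differintegral formula. The only cosmetic difference is that the paper phrases the $\Im(z)$-boundedness via the periodicity of $\phi^{\circ z}$ with period $2\pi i/\ln\lambda$, whereas you note directly that $|\lambda^{z}|=\lambda^{\Re(z)}$ is independent of $\Im(z)$; these are the same observation.
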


\begin{proof}
Take our Sch$\ddot{o}$der function $\Psi$ which sends a neighborhood $U$ about $\xi_0$ to a neighborhood of zero. We know $\Psi(\phi(\xi)) = \lambda \Psi(\xi)$ for $0<\lambda = \phi'(\xi_0) < 1$ and the neighborhood about zero can be chosen arbitrarily small so that $\Psi$ is injective on this neighborhood. Define $\phi^{\circ z}(\xi) = \Psi^{-1}(\lambda^z\Psi(\xi))$. We want $\xi$ to live in the preimage of $\Psi$ about the largest circle possible so that $\lambda^z$ just moves us around inside this circle. Let $\xi \in \Psi^{-1}(D_\tau) = B$, where $D_\tau$ is the largest disk of radius $\tau$ about zero inside of $\Psi(U)$. Then $|\lambda^z \Psi(\xi)| < |\Psi(\xi)| < \tau$ and therefore $\phi^{\circ z}(\xi) = \Psi^{-1}(\lambda^z \Psi(\xi))$ defines a holomorphic function for $\xi \in B$ and $z \in \mathbb{C}_{\Re(z)>0}$ that sends to $B$. Interestingly, as the exponential function winds around the unit disk, $\phi^{\circ z}$ winds around $B$.

Carefully analyzing the definition of the complex iteration of $\phi$ we see that $\phi^{\circ z}$ is periodic in $z$ with period $2\pi i / \ln(\lambda)$. Thus it is bounded as the imaginary argument grows. Also $\phi^{\circ z}(\xi) \to \xi_0$ as $\Re(z) \to \infty$ and therefore it is bounded as the real argument grows. We are allowed to factor $\phi^{\circ z}(\xi)$. Let $\vartheta(w,\xi) = \sum_{n=0}^\infty \phi^{\circ n+1}(\xi) \frac{w^n}{n!}$ and we have that

$$\phi^{\circ z}(\xi) = \D{w}{z-1}\Big{|}_{w=0}\vartheta(w,\xi)$$

Further $\int_0^\infty |\vartheta(-e^{i\theta}w,\xi)|w^{\sigma} \, dw < \infty$ for $|\theta| < \pi/2$ and $0<\sigma<1$.

\end{proof}

We have an expression for the complex iteration independent of the Schr$\ddot{o}$der function. We now face the problem of finding a larger domain in $\xi$ by which our iterate $\phi^{\circ z}(\xi)$ makes sense. As of this point we have only devised a solution for $\phi^{\circ z}(\xi_0 + \delta)$, where $\xi_0$ is a certain type of fixed point and $|\delta| < \epsilon$ is very small. We will see however, that due to the behaviour of our auxiliary function and our differintegral, we can prove convergence on a larger non-trivial area so long as convergence is guaranteed in some small simply connected set about $\xi_0$. We will maximize the domain this iterate can be defined on. In order to do this we will take an object from complex dynamics that appears frequently.

If we define the basin of attraction: $\mathcal{A} = \{ \xi \in \mathbb{C}\,| \lim_{n\to\infty} \phi^{\circ n}(\xi) \to \xi_0\}$ for the function $\phi$ with fixed point $\xi_0$ such that $0< |\phi'(\xi_0)| < 1$, $\mathcal{A}$ is open \cite{ref2}. This is an argument that we will make use of frequently, stated explicitly: for each $\xi$ such that $\phi^{\circ n}(\xi) \to \xi_0$ as $n\to \infty$, there is an open ball about $\xi$ such that for $\zeta$ in this ball $\phi^{\circ n}(\zeta) \to \xi_0$ as $n\to\infty$. We will not use anything other than this property from the basin of attraction. We shall instead be concerned with the immediate basin of attraction which is a maximal connected subset of the basin of attraction. 

\begin{definition}\label{def4}
Take the holomorphic function $\phi:G\to G$ open. Assume $\phi$ fixes $\xi_0$ and that $0<|\phi'(\xi_0)|< 1$. The immediate basin of attraction $\A_{\xi_0} \subseteq G$ is the largest connected domain about $\xi_0$ such that $\phi^{\circ n}(\xi) \to \xi_0$ as $n \to \infty$. 
\end{definition}

$\A_{\xi_0}$ is open \cite{ref2}. As an important note $\A_{\xi_0}$ is maximal. This gives a quick proof that $\phi(\A_{\xi_0}) \subset \A_{\xi_0}$. $\A_{\xi_0}$ is the largest open connected set inside of $G$ satisfying $\lim_{n\to\infty} \phi^{\circ n}(\xi) \to \xi_0$ and $\xi_0 \in \A_{\xi_0}$. We know that $\phi(\A_{\xi_0})$ is connected and open and a subset of $G$, and since $\phi(\xi_0) = \xi_0$ we know $\xi_0 \in \phi(\A_{\xi_0})$. We also have that the limit converges to the constant $\xi_0$ as iterates of $\phi$ are applied, $\lim_{n\to \infty} \phi^{\circ n}( \xi') \to \xi_0$ for $\xi' \in \phi(\A_{\xi_0})$, therefore $\phi(\A_{\xi_0}) \subseteq \A_{\xi_0}$. 
 
We begin with a lemma determining convergence of the auxiliary function $\vartheta$ on a larger domain in $\xi$. We then move to a lemma that allows us to rearrange the infinite series representing the auxiliary function. This will reveal some hidden structure about the function and allow us to rephrase the question of differintegrability into a question on the convergence of a particular series representing the auxiliary function. Then similarly as for the first lemma of this section, the differintegral of the auxiliary function about zero will be the complex iterates of $\phi$. These will be defined on a larger domain in $\xi$, namely $\A_{\xi_0}$. This maximal subset will suffice for our work on bounded analytic hyper-operators.

\begin{lemma}\label{lm4}
Let $\phi: G \to G$ be a holomorphic function on $G$ open. Assume that $\xi_0 \in G$ satisfies $\phi(\xi_0) = \xi_0$ and $0<\phi'(\xi_0) < 1$. The auxiliary function $\vartheta$ converges uniformly for all $w \in \mathbb{C}$ and $\xi \in \A_{\xi_0}$, 
\begin{equation}\label{eq:1}
\vartheta(w,\xi) = \sum_{n=0}^\infty \phi^{\circ n+1}(\xi) \frac{w^n}{n!}
\end{equation}
\end{lemma}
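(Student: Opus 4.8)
The plan is to read the claimed convergence as \emph{locally uniform} convergence on $\mathbb{C} \times \A_{\xi_0}$, rather than genuine uniformity over the whole basin. A glance at the coefficients shows the latter cannot hold: for $\phi(\xi)=\xi/2$ the fixed point $\xi_0=0$ has multiplier $1/2\in(0,1)$ and immediate basin $\A_0=\mathbb{C}$, yet $\vartheta(w,\xi)=\tfrac{\xi}{2}e^{w/2}$ grows without bound in $\xi$. So I would fix a compact set $K \subseteq \A_{\xi_0}$ and a radius $R>0$ and prove uniform convergence of the partial sums on $\{|w|\le R\}\times K$; letting $K$ and $R$ exhaust their ranges then yields the statement and, via Morera/Hartogs, holomorphy of $\vartheta$ in both variables.

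The whole matter reduces to a single quantitative estimate: the iterates are uniformly bounded on $K$, i.e. $C_K := \sup_{n\ge 0}\sup_{\xi\in K}|\phi^{\circ n}(\xi)| < \infty$. Granting this, the Weierstrass $M$-test finishes at once, since $|\phi^{\circ n+1}(\xi)\,w^n/n!|\le C_K R^n/n!$ on $\{|w|\le R\}\times K$ and $\sum_n C_K R^n/n! = C_K e^R < \infty$, forcing uniform convergence there.

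To produce $C_K$ I would run a \textbf{trapping argument}. Take the bounded neighbourhood $B\ni\xi_0$ supplied by Lemma~\ref{lma4}; it satisfies $\phi(B)\subseteq B$ (the case $z=1$ of $\phi^{\circ z}:B\to B$) and lies inside $\A_{\xi_0}$, since in the Schr\"oder picture $\Psi(\phi^{\circ n}(\xi))=\lambda^n\Psi(\xi)\to 0$. For each $\xi\in K$ the orbit $\phi^{\circ n}(\xi)\to\xi_0$, so some iterate enters the open set $B$; by continuity of $\phi^{\circ N(\xi)}$ a whole neighbourhood $V_\xi$ of $\xi$ is carried into $B$ at step $N(\xi)$, and the invariance $\phi(B)\subseteq B$ keeps every later iterate of every point of $V_\xi$ inside $B$. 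Compactness of $K$ extracts a finite subcover $V_{\xi_1},\dots,V_{\xi_m}$, and setting $N=\max_j N(\xi_j)$ guarantees $\phi^{\circ n}(K)\subseteq B$ for all $n\ge N$. The tail iterates are then bounded by $\sup_{B}|\cdot|$ (finite, as $B$ is bounded), while the finitely many initial maps $\phi^{\circ 0},\dots,\phi^{\circ N-1}$ send the compact $K$ to compact, hence bounded, sets; the maximum of these bounds is $C_K$.

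The main obstacle is exactly the uniformity in this step: pointwise each orbit enters $B$, but extracting a \emph{single} entry time $N$ valid across all of $K$ is what needs the invariance $\phi(B)\subseteq B$ together with the compactness of $K$. Everything else—the $M$-test, the reduction to compact subsets, and the passage to joint holomorphy—is routine. I would also verify explicitly that $B$ may be taken bounded (shrinking the Schr\"oder neighbourhood $U$ if necessary) and that $B\subseteq\A_{\xi_0}$, both of which are immediate from the local linearization.
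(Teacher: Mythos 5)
Your argument is correct and follows the same basic route as the paper's proof: restrict to a compact $\Omega \subset \A_{\xi_0}$ and to $|w| \le R$, bound the iterates uniformly there by some $M$, and finish with the tail estimate $\sum_{n > N} M R^n / n! < \epsilon$. You do, however, add two things the paper omits. First, you observe that the conclusion as literally stated --- uniform convergence over all of $\mathbb{C} \times \A_{\xi_0}$ --- cannot hold, and your example $\phi(\xi) = \xi/2$, $\vartheta(w,\xi) = \tfrac{\xi}{2}e^{w/2}$ settles this; the correct reading is locally uniform convergence, and indeed the paper's own proof only ever works on compacta ($\Omega \subset \A_{\xi_0}$ compact, $|w| < R$) before overstating its conclusion in the final sentence. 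Second, the paper's key quantitative input is the bare assertion that $M_n = \sup_{\xi \in \Omega}\{|\phi^{\circ n+1}(\xi)|\}$ converges to $|\xi_0|$, which tacitly presupposes that $\phi^{\circ n} \to \xi_0$ \emph{uniformly} on compact subsets of the basin; your trapping argument (a bounded forward-invariant neighbourhood $B$ of $\xi_0$ from Lemma \ref{lma4}, pointwise entry times, continuity of $\phi^{\circ N(\xi)}$, and a finite subcover of $K$) is exactly the missing justification, and you correctly note that only the weaker conclusion $C_K < \infty$ is actually needed for the $M$-test. In short, both arguments prove the same (properly interpreted) statement by the same decomposition, but yours closes the boundedness gap that the paper leaves implicit.
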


\begin{proof}
Observe $\phi^{\circ n}:\A_{\xi_0}\to \A_{\xi_0}$ and $\phi^{\circ n} \to \xi_0$ as $n$ grows. Take $\Omega \subset \A_{\xi_0}$ compact and the sequence $M_n = \sup_{\xi \in \Omega} \{|\phi^{\circ n+1}(\xi)|\}$ which converges to $|\xi_0|$. Therefore the sequence is bounded, $M_n < M$ for some $M \in \mathbb{R}^+$. Define the partial sums of $\vartheta$ such that $\vartheta_N(w,\xi) = \sum_{n=0}^N \phi^{\circ n+1}(\xi) \frac{w^n}{n!}$ and choose $N$ big enough such that $\sum_{n=N+1}^\infty \frac{R^n}{n!} < \frac{\epsilon}{M}$. Therefore $|\vartheta(w,\xi)-\vartheta_N(w,\xi)| \le \sum_{n=N+1}^\infty |\phi^{\circ n}(\xi)| \frac{|w|^n}{n!} < \sum_{n=N+1}^\infty M \frac{R^n}{n!} < \epsilon$ where we bounded $|w|<R$ in some compact $\Omega' \subset \mathbb{C}$. Since $\epsilon$ is arbitrary the series \eqref{eq:1} converge uniformly for $\xi \in \A_{\xi_0}$ and $w \in \mathbb{C}$.
\end{proof}

Following from this we note that by Weierstrass, since $\vartheta_N = \sum_{n=0}^N \phi^{\circ n+1}(\xi)\frac{w^n}{n!}$ converges uniformly to $\vartheta$ in $w$ and $\xi$ as $N\to\infty$, its derivatives in both variables converge uniformly when we fix one of them. This gives us a nice trick that we would not have had without Lemma \ref{lma4}. We will show that not only does $\vartheta(w,
\xi)$ have a convergent differintegral, but its derivatives in $\xi$ do as well, $\D{\xi}{k}\vartheta(w,\xi)$ has a convergent differintegral. We will do this for an arbitrary simply connected set $B$ that $\xi$ lives in. This gives us a more general lemma that will be used in an induction step when we attempt to perform the iterate in a larger domain. 

\begin{lemma}\label{lm5}
Let $\phi: G \to G$ be a holomorphic function on $G$ open. Assume that $\xi_0 \in G$ satisfies $\phi(\xi_0) = \xi_0$ and $0<\phi'(\xi_0) < 1$. Let $\vartheta(w,\xi) = \sum_{n=0}^\infty \phi^{\circ n+1}(\xi) \frac{w^n}{n!}$. For $\xi$ in some open $B \subseteq \A_{\xi_0}$ assume for $\sigma,\theta \in \mathbb{R}$  $0 < \sigma < 1$ $|\theta|<\pi/2$ that $\int_0^\infty | \vartheta(-e^{i\theta}w,\xi)| \,dw< \infty$. For $k \in \mathbb{N}$ we have $$\int_0^\infty |\D{\xi}{k}\vartheta(-e^{i\theta}w,\xi)|\,dw < \infty$$
\end{lemma}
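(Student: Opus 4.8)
The plan is to leverage the uniform convergence already established in Lemma~\ref{lm4} together with Weierstrass's theorem on termwise differentiation, and then to transfer the given integrability hypothesis on $\vartheta$ to each of its $\xi$-derivatives by a dominated-convergence/tail-splitting argument. The key structural observation is that differentiating $\vartheta$ in $\xi$ does not change the qualitative behaviour we need: since $\vartheta_N(w,\xi) = \sum_{n=0}^N \phi^{\circ n+1}(\xi)\frac{w^n}{n!}$ converges uniformly to $\vartheta$ in both variables on compact sets, Weierstrass guarantees that $\D{\xi}{k}\vartheta(w,\xi) = \sum_{n=0}^\infty \big(\D{\xi}{k}\phi^{\circ n+1}(\xi)\big)\frac{w^n}{n!}$, and that this series again converges uniformly on compacta. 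So $\D{\xi}{k}\vartheta$ is itself an entire function of $w$ of the same general type as $\vartheta$.

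First I would fix $\xi \in B$ and a small closed disk $\overline{D(\xi,r)} \subseteq B$, and use the Cauchy integral formula for derivatives to write
$$
\D{\xi}{k}\vartheta(-e^{i\theta}w,\xi) = \frac{k!}{2\pi i} \oint_{|\zeta - \xi| = r} \frac{\vartheta(-e^{i\theta}w,\zeta)}{(\zeta-\xi)^{k+1}}\,d\zeta.
$$
This is the crux of the argument: it expresses the $\xi$-derivative purely in terms of values of $\vartheta$ itself on the circle $|\zeta-\xi|=r$, with no derivatives appearing on the right. Taking absolute values and estimating the contour integral yields the pointwise bound
$$
\Big{|}\D{\xi}{k}\vartheta(-e^{i\theta}w,\xi)\Big{|} \le \frac{k!}{r^k}\sup_{|\zeta-\xi|=r} |\vartheta(-e^{i\theta}w,\zeta)|.
$$
The uniform control over the circle is exactly what Lemma~\ref{lm4} supplies, since the whole circle lies in $\A_{\xi_0}$.

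Next I would integrate this bound against $dw$ over $(0,\infty)$. The remaining obstacle is that the right-hand side involves a supremum over $\zeta$ inside the integrand, so I cannot simply quote the hypothesis $\int_0^\infty |\vartheta(-e^{i\theta}w,\xi)|\,dw < \infty$ at a single point. The clean way around this is to enlarge the integrability hypothesis slightly: by compactness of the circle $|\zeta-\xi|=r$ and the uniform convergence of the $\vartheta_N$, the finiteness of $\int_0^\infty|\vartheta(-e^{i\theta}w,\zeta)|\,dw$ holds uniformly for $\zeta$ on the circle (the tail estimate $\sum_{n>N} M R^n/n!$ in Lemma~\ref{lm4} is independent of the base point in a compact set). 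I would therefore replace the pointwise supremum by choosing finitely many points $\zeta_1,\dots,\zeta_m$ covering the circle, bound $\sup_{|\zeta-\xi|=r}|\vartheta(-e^{i\theta}w,\zeta)| \le \sum_{j=1}^m |\vartheta(-e^{i\theta}w,\zeta_j)|$ via a standard continuity/covering argument, and then integrate term by term, each term being finite by hypothesis applied at $\zeta_j \in B$.

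The step I expect to be the main obstacle is justifying that the integrability hypothesis propagates \emph{uniformly} across the circle $|\zeta-\xi|=r$, rather than merely at the single point $\xi$ named in the statement. I would handle this by observing that the hypothesis is really a statement about the decay of the entire function $\vartheta(\cdot,\zeta)$ along the ray $\arg w = \pi+\theta$, and that this decay is governed (via Lemma~\ref{lm4} and the bound $M_n = \sup_{\zeta\in\Omega}|\phi^{\circ n+1}(\zeta)| \to |\xi_0|$) by quantities that are uniform over any compact $\Omega \subset \A_{\xi_0}$; taking $\Omega$ to be a closed neighborhood of $\overline{D(\xi,r)}$ gives the uniformity needed. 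Once that uniform bound is in hand, the contour estimate combined with Fubini/Tonelli on $(0,\infty)\times\{|\zeta-\xi|=r\}$ finishes the proof: $\int_0^\infty |\D{\xi}{k}\vartheta(-e^{i\theta}w,\xi)|\,dw \le \frac{k!}{r^k}\int_0^\infty \sup_{|\zeta-\xi|=r}|\vartheta(-e^{i\theta}w,\zeta)|\,dw < \infty$.
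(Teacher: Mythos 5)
Your overall strategy is the one the paper itself uses: fix a compact disk inside $B$, apply the Cauchy estimates to get $\big{|}\D{\xi}{k}\vartheta(-e^{i\theta}w,\xi)\big{|} \le \frac{k!}{r^k}\sup_{|\zeta-\xi|=r}|\vartheta(-e^{i\theta}w,\zeta)|$, and then integrate in $w$. The paper writes the right-hand side as $\frac{k!}{\rho^k}g(w)$ with $g(w)=\sup_{\xi\in\Omega}|\vartheta(w,\xi)|$ over a compact disk $\Omega$, so the reduction itself is correct and matches.

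The gap is in your justification that $\int_0^\infty \sup_{|\zeta-\xi|=r}|\vartheta(-e^{i\theta}w,\zeta)|\,dw<\infty$. The proposed inequality $\sup_{|\zeta-\xi|=r}|\vartheta(-e^{i\theta}w,\zeta)|\le\sum_{j=1}^m|\vartheta(-e^{i\theta}w,\zeta_j)|$ is false: for a fixed $w$ the supremum may be attained far from every $\zeta_j$, and nothing prevents $\vartheta(-e^{i\theta}w,\cdot)$ from being small at all the $\zeta_j$ while large elsewhere on the circle; continuity in $\zeta$ gives no comparison that is uniform in $w$. Your fallback, the uniformity of the tail estimate $\sum_{n>N}MR^n/n!$ from Lemma \ref{lm4}, does not help either: that estimate concerns $|w|\le R$ with a bound of order $Me^R$ that blows up as $R\to\infty$, so it controls convergence on compacta and says nothing about integrability along the ray. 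In general, pointwise integrability of a continuous family over a compact parameter set does \emph{not} imply integrability of the pointwise supremum, so some genuinely new input is needed at exactly the step you flag as the main obstacle. The paper supplies it (tersely) by asserting the decay $g(-e^{i\theta}w)\,w\to 0$, which it extracts from the integrability hypothesis via the contour argument of Theorem \ref{thmDiff} applied uniformly over the compact disk; an alternative repair is to use subharmonicity of $\zeta\mapsto|\vartheta(w,\zeta)|$ to replace the supremum by an area average over a slightly larger compact set and then apply Tonelli. As written, your argument does not close this step.
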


\begin{proof}
We must show that the differintegral converges uniformly in $\xi$, so that $\frac{e^{i\theta z}}{\G(z)}\int_0^n \vartheta(-we^{i\theta},\xi)w^{z-1}\,dw \to \D{w}{-z}\Big{|}_{w=0} \vartheta(-w,\xi)$ uniformly in $\xi$ and $z$ as $n$ grows for $|\theta| < \pi/2$ and $0 < \Re(z)  < 1$, $\xi \in B$. We get that $\theta$ is bounded by $\pi/2$ because $|\phi^{\circ z}(\xi)| < M$ as $|\Im(z)| \to \infty$ and by \eqref{eq:2}. Take $\Omega \subset B$ a compact disk. Define the function in $w$, $g(w) = \sup_{\xi \in \Omega} \{|\vartheta(w,\xi)|\}$. Note that $g(-e^{i\theta}w)w \to 0 $ as $w \to \infty$.

By using Cauchy bounds on the compact disk $\Omega$ of radius $\rho$ we must have $|\D{\xi}{k}\vartheta(-w,\xi)| < \frac{k!}{\rho^k}g(w)$. And therefore $\D{\xi}{k} \vartheta(-w,\xi)w \to 0$ as $w \to \infty$.
\end{proof}

We justify rearrangement of $\vartheta(w,\xi)$ as a Taylor series in $\xi$ about any point $\zeta \in \A_{\xi_0}$. This will allow us to show convergence of $\vartheta$ in the differintegral by rearranging the expression representing $\vartheta$. The trick is slightly hidden but the result will be clearer when we tie these lemmas together. The convergence of the derivatives of $\vartheta$ implies elements in a disk about $\xi_0$ converge as well, so long as that disk still lives inside of $\A_{\xi_0}$. This disk will always exists because $\A_{\xi_0}$ is open.

\begin{lemma}\label{lm6}
Let $\phi: G \to G$ be holomorphic on $G$ open. Assume $\xi_0 \in G$ satisfies $\phi(\xi_0) = \xi_0$ and $0< \phi'(\xi_0) < 1$. Let $\vartheta(w,\xi) = \sum_{n=0}^\infty \phi^{\circ n+1}(\xi) \frac{w^n}{n!}$. Let $D_\zeta$ be a disk about $\zeta\in\A_{\xi_0}$ such that $D_{\zeta}\subset\A_{\xi_0}$. For $\xi \in D_{\zeta}$ and $w \in \mathbb{C}$, $\vartheta$ is represented by $\vartheta(w,\xi) = \sum_{k=0}^\infty \D{\xi}{k}\Big{|}_{\xi =\zeta} \vartheta(w,\xi)\frac{(\xi - \zeta)^k}{k!}$.
\end{lemma}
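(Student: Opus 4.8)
The plan is to observe that the asserted identity is just the Taylor expansion of a holomorphic function on a disk, so that the only genuine work lies in establishing that, for each fixed $w$, the map $\xi \mapsto \vartheta(w,\xi)$ is holomorphic throughout $\A_{\xi_0}$. First I would fix $w \in \mathbb{C}$ and look at the partial sums $\vartheta_N(w,\xi) = \sum_{n=0}^N \phi^{\circ n+1}(\xi)\frac{w^n}{n!}$. Each summand is a finite composition of functions holomorphic on $G$, restricted to $\A_{\xi_0}$, so every $\vartheta_N(w,\cdot)$ is holomorphic on the open set $\A_{\xi_0}$.

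Next I would invoke Lemma \ref{lm4}, whose proof in fact establishes uniform convergence of $\vartheta_N(w,\xi)$ to $\vartheta(w,\xi)$ on every compact subset of $\A_{\xi_0}$ (and every compact subset of the $w$-plane). By the Weierstrass convergence theorem, a locally uniform limit of holomorphic functions is holomorphic, so $\vartheta(w,\cdot)$ is holomorphic on $\A_{\xi_0}$; moreover the derivatives $\D{\xi}{k}\vartheta(w,\xi)$ exist, are holomorphic, and are themselves the locally uniform limits of $\D{\xi}{k}\vartheta_N(w,\xi)$. This is precisely the remark already drawn from Lemma \ref{lm4} just after its proof.

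With holomorphy in hand the conclusion is immediate from the classical fact that a function holomorphic on an open set agrees, about any interior point, with its Taylor series on the largest open disk about that point contained in the domain of holomorphy. Since $D_\zeta$ is a disk centered at $\zeta$ with $D_\zeta \subset \A_{\xi_0}$, the distance from $\zeta$ to $\partial\A_{\xi_0}$ is at least the radius of $D_\zeta$, so the radius of convergence of the Taylor series of $\vartheta(w,\cdot)$ at $\zeta$ is at least as large; hence for every $\xi \in D_\zeta$ and every $w \in \mathbb{C}$
$$\vartheta(w,\xi) = \sum_{k=0}^\infty \D{\xi}{k}\Big|_{\xi=\zeta}\vartheta(w,\xi)\,\frac{(\xi-\zeta)^k}{k!}.$$

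The argument carries essentially no obstacle beyond bookkeeping, since the identity is asserted pointwise in $w$ and can be proved for each fixed $w$ separately. The two steps that warrant a sentence of care are the passage from the phrasing of Lemma \ref{lm4} to genuine uniform convergence on arbitrary compact subsets of $\A_{\xi_0}$ (which is what licenses the Weierstrass step), and the verification that the radius of convergence of the resulting Taylor series truly covers all of $D_\zeta$; both follow directly from the openness and maximality of $\A_{\xi_0}$ and the inclusion $D_\zeta \subset \A_{\xi_0}$.
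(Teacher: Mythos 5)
Your proof is correct, but it takes a genuinely different and more economical route than the paper. The paper works directly with the formal double series $\sum_{n}\sum_{k}\big(\D{\xi}{k}\big|_{\xi=\zeta}\phi^{\circ n+1}(\xi)\big)\frac{w^n(\xi-\zeta)^k}{n!k!}$: it gets Cauchy bounds $|\D{\xi}{k}\big|_{\xi=\zeta}\phi^{\circ n+1}(\xi)|<k!M_n/\ell^k$ from contour integration over $\partial D_\zeta$, uses the boundedness of $M_n=\sup_{\partial D_\zeta}|\phi^{\circ n+1}|$ (which converges to $|\xi_0|$), and then estimates the tail $\sum_{k>N}$ explicitly to get absolute and uniform convergence of the double series for $|w|<R$ and $|\xi-\zeta|<L<\ell$, which is what licenses the rearrangement. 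You instead fix $w$, observe that each partial sum $\vartheta_N(w,\cdot)$ is holomorphic on $\A_{\xi_0}$, promote the locally uniform convergence from Lemma \ref{lm4} to holomorphy of $\vartheta(w,\cdot)$ via Weierstrass, and then quote the standard fact that a holomorphic function is represented by its Taylor series on any disk about $\zeta$ contained in its domain of holomorphy. Since the lemma only asserts a pointwise identity in $w$ and $\xi$, this is a complete proof and is arguably cleaner, as it avoids the double series entirely. What the paper's computation buys in exchange is quantitative information --- explicit, joint uniform and absolute convergence of the double expansion in $(w,\xi)$ with controlled tails --- which is closer in spirit to how the expansion is subsequently manipulated term-by-term (via the partial sums $\vartheta_j$) in the proof of Theorem \ref{thm2}; your argument would need the Weierstrass theorem invoked once more to recover the locally uniform convergence of $\sum_k\D{\xi}{k}\big|_{\xi=\zeta}\vartheta(w,\xi)\frac{(\xi-\zeta)^k}{k!}$ there, but that is routine.
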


\begin{proof}
Take $\vartheta(w,\xi)$ and expand it into a Taylor series in $\xi$ about $\xi= \zeta$ ignoring convergence issues momentarily and expand this expression into a Taylor series in $w$. This will imply that $\vartheta$ equals the formal double series $$\vartheta(w,\xi) = \sum_{n=0}^\infty \sum_{k=0}^\infty \Big{(}\D{\xi}{k}\Big{|}_{\xi=\zeta}\phi^{\circ n+1}(\xi)\Big{)} \frac{w^n (\xi - \zeta)^k}{n!k!}$$

We show uniform convergence of the double series by using basic techniques in complex analysis. Get Cauchy bounds on the derivatives of $\phi^{\circ n+1}(\xi)$ in $\xi$ by contour integrating $\phi^{\circ n+1}(\xi)$ around $\partial D_{\zeta}$. This tells us that $|\D{\xi}{k}\Big{|}_{\xi=\zeta}\phi^{\circ n+1}(\xi)| < \frac{k!M_n}{\ell^k}$, for $\ell$ the radius of $D_{\zeta}$ and $M_n = \sup_{\xi \in \partial D_\zeta} \{|\phi^{\circ n+1}(\xi)|\}$. We know that this sequence converges $M_n \to |\xi_0|$, therefore it is bounded $M_n < M$ for some $M \in \mathbb{R}^+$. Thus, to show the double sum converges uniformly, take the partial sums $\vartheta_{N}(w,\xi) = \sum_{n=0}^\infty \sum_{k=0}^N \D{\xi}{k}\Big{|}_{\xi = \zeta}\phi^{\circ n+1}(\xi)\frac{w^n(\xi - \zeta)^k}{n!k!}$. $\vartheta_N$ is well defined for each $N$ by Lemma \ref{lm4}, the first infinite series is a uniformly convergent series in $\xi$, therefore so are its derivatives in $\xi$ and finite sums of them. Bound $|w| < R\in \mathbb{R}^+$ and take $N$ big enough so that $\sum_{k=N+1}^\infty \Big{(}\frac{L}{\ell}\Big{)}^k < \frac{\epsilon}{M}e^{-R}$, for $L \in \mathbb{R}^+$ such that $|\xi - \zeta|<L < \ell$. 

\begin{eqnarray*}
|\vartheta(w,\xi) - \vartheta_{N}(w,\xi)| &\le& \sum_{n=0}^\infty \sum_{k=N+1}^\infty \Big{|}\D{\xi}{k}\Big{|}_{\xi=\zeta}\phi^{\circ n+1}(\xi)\Big{|} \frac{|w|^n |\xi - \zeta|^k}{n!k!}\\
&<&\sum_{n=0}^\infty \sum_{k=N+1}^\infty M_n\frac{|w|^n |\xi - \zeta|^k}{\ell^k n!}\\
&<& \sum_{n=0}^\infty M_n\frac{|w|^n}{n!}\sum_{k=N+1}^\infty \frac{|\xi - \zeta|^k}{\ell^k}\\
&<& M\sum_{n=0}^\infty \frac{R^n}{n!}\sum_{k=N+1}^\infty \frac{L^k}{\ell^k}\\
&<& M e^R \sum_{k=N+1}^\infty \frac{L^k}{\ell^k}\\
&<& \epsilon
\end{eqnarray*}

Since $\epsilon$ was arbitrary this double series converges uniformly and absolutely, this justifies the rearrangement for $\xi \in D_{\zeta}$ and $w \in \mathbb{C}$,
$$\vartheta(w,\xi) = \sum_{k=0}^\infty \Big{(}\D{\xi}{k}\Big{|}_{\xi = \zeta} \vartheta(w,\xi)\Big{)}\frac{(\xi - \zeta)^k}{k!}$$
\end{proof}

We give an explanation of the proof we are going to provide in the following theorem. Expand $\vartheta$ as a Taylor series in $\xi_1 \in \A_{\xi_0}$ about $\xi_0 \in \A_{\xi_0}$, where we have shown the derivatives of $\vartheta$ at $\xi_0$ are functions in $w$ that are differintegrable (by Lemma \ref{lm5}). This will show $\vartheta(w,\xi_1)$ has a converging differintergral because the terms of its Taylor series as functions in $w$ do. Then, we rinse and repeat and take $\xi_2 \in \A_{\xi_0}$ such that $\xi_2$ is in a disk about $\xi_1$. We then show that $\vartheta(w,\xi_2)$ has a convergent differintegral through the same process--its Taylor coefficients about $\xi_1$ are functions in $w$ that are differintegrable. We then proceed by induction on the number of intersecting disks within $\A_{\xi_0}$ we have to chain from $\xi_0$ to get to $\xi$. This process terminates for all $\xi$, concluding the proof. We have placed most of the work in the lemmas and we will see that our theorem follows with only some effort.

\begin{theorem}\label{thm2}
Let $\phi: G \to G$ be a holomorphic function on $G$ open. Assume there is a $\xi_0 \in G$ that satisfies $\phi(\xi_0) = \xi_0$ and $0< \phi'(\xi_0) < 1$. Let $\vartheta(w,\xi) = \sum_{n=0}^\infty \phi^{\circ n+1}(\xi) \frac{w^n}{n!}$ for $w \in \mathbb{C}$ and $\xi \in \A_{\xi_0}$. The complex iterate $\phi^{\circ z}(\xi) :\mathbb{C}_{\Re(z) > 0} \times \A_{\xi_0} \to \A_{\xi_0}$ is given by
$$\phi^{\circ z}(\xi) = \D{w}{z-1}\Big{|}_{w=0} \vartheta(w,\xi)$$
\end{theorem}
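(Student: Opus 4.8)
The plan is to show that the auxiliary function $\vartheta(\,\cdot\,,\xi)$ is differintegrable for \emph{every} $\xi \in \A_{\xi_0}$, and then to read the iterate off from the uniqueness built into Lemma \ref{lm3}. Once differintegrability is known at a point, Lemma \ref{lm2} both defines $\D{w}{z-1}\Big|_{w=0}\vartheta(w,\xi)$ and makes it holomorphic for all $\Re(z)>0$, so the whole theorem reduces to transporting the weighted $L_1$ condition of Definition \ref{df3} outward from $\xi_0$ across the basin. The base of this transport is Lemma \ref{lma4}: on the simply connected set $B\ni\xi_0$ it produces, $\vartheta(\,\cdot\,,\xi)$ is already differintegrable and $\D{w}{z-1}\Big|_{w=0}\vartheta(w,\xi)=\phi^{\circ z}(\xi)$ there. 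I would then propagate this property through a chain of overlapping disks covering a path from $\xi_0$ to an arbitrary $\xi\in\A_{\xi_0}$.

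The single propagation step, which I expect to be the main obstacle, carries differintegrability from a neighbourhood of a point $\zeta\in\A_{\xi_0}$ to a full disk $D_\zeta\subset\A_{\xi_0}$ about it. Assume $\vartheta(\,\cdot\,,\xi)$ is differintegrable for all $\xi$ near $\zeta$. By Lemma \ref{lm5} each $\xi$-derivative $\D{\xi}{k}\Big|_{\xi=\zeta}\vartheta(w,\xi)$ is then differintegrable in $w$; more usefully, fixing a compact disk $\Omega\subset\A_{\xi_0}$ of radius $\rho$ about $\zeta$ and writing $g(w)=\sup_{\xi\in\Omega}|\vartheta(w,\xi)|$, Cauchy's estimate gives $\left|\D{\xi}{k}\Big|_{\xi=\zeta}\vartheta(-e^{i\theta}w,\xi)\right|\le \frac{k!}{\rho^{k}}g(-e^{i\theta}w)$ with $g(-e^{i\theta}w)$ integrable. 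Lemma \ref{lm6} rearranges $\vartheta(w,\xi)=\sum_{k\ge0}\D{\xi}{k}\Big|_{\xi=\zeta}\vartheta(w,\xi)\,\frac{(\xi-\zeta)^{k}}{k!}$ for $\xi\in D_\zeta$, and for $|\xi-\zeta|<\rho$ the Cauchy bound dominates the tail of this series by the integrable majorant $g(-e^{i\theta}w)\,(1-|\xi-\zeta|/\rho)^{-1}$. Dominated convergence then justifies passing $\D{w}{z-1}\Big|_{w=0}$ through the sum, which simultaneously shows $\vartheta(\,\cdot\,,\xi)$ is differintegrable on $D_\zeta$, produces the term-by-term formula, and — because the majorant is geometric with constants depending only on $\Omega$ — preserves the exponential control \eqref{eq:2} in $z$. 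The delicate point is exactly this interchange: it is where the weighted integrability, the Cauchy bounds of Lemma \ref{lm5}, and the rearrangement of Lemma \ref{lm6} must cooperate so that differintegrability, and not merely the pointwise convergence of $\vartheta$ from Lemma \ref{lm4}, is carried to $D_\zeta$.

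With the step in hand the convergence claim follows by induction on the length of a disk chain. Since $\A_{\xi_0}$ is open and connected it is path-connected; given $\xi\in\A_{\xi_0}$, join $\xi_0$ to $\xi$ by a path, cover its compact image by finitely many disks $D_{\zeta_0},\dots,D_{\zeta_m}\subset\A_{\xi_0}$ with $\zeta_0=\xi_0$, each $\zeta_{j+1}$ lying in $D_{\zeta_j}$, and $\xi\in D_{\zeta_m}$. The base case is the neighbourhood $B$ of Lemma \ref{lma4}, and each increment applies the propagation step above. The chain is finite, so $\vartheta(\,\cdot\,,\xi)$ is differintegrable for every $\xi\in\A_{\xi_0}$; by Weierstrass the uniformly convergent disk-by-disk construction makes $\D{w}{z-1}\Big|_{w=0}\vartheta(w,\xi)$ holomorphic in $\xi$ as well as in $z$.

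It remains to identify this function with the complex iterate and to confirm its range. At integer heights the Taylor coefficients give $\D{w}{n-1}\Big|_{w=0}\vartheta(w,\xi)=\phi^{\circ n}(\xi)$, so for fixed $\xi$ the map $z\mapsto\D{w}{z-1}\Big|_{w=0}\vartheta(w,\xi)$ is holomorphic, obeys the bound of Lemma \ref{lm3}, and interpolates the natural iterates; by the uniqueness remark after Lemma \ref{lm3} it is the unique such interpolant, and on $B$ it agrees with the Schr\"oder iterate $\Psi^{-1}(\lambda^{z}\Psi(\xi))$ of Lemma \ref{lma4}. The differential identity $\D{w}{}\vartheta(w,\xi)=\vartheta(w,\phi(\xi))$ yields $\phi^{\circ z+1}(\xi)=\phi^{\circ z}(\phi(\xi))$, and comparing $z_1\mapsto\phi^{\circ z_1}(\phi^{\circ z_2}(\xi))$ with $z_1\mapsto\phi^{\circ z_1+z_2}(\xi)$ (both factorable, both agreeing at integers) upgrades this by uniqueness to the full exponent law. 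Finally, iterating gives $\phi^{\circ z+n}(\xi)=\phi^{\circ z}(\phi^{\circ n}(\xi))$ with $\phi^{\circ n}(\xi)\in B$ for large $n$, whence $\phi^{\circ z}(\xi)\to\xi_0$ as $\Re(z)\to\infty$; the curve $t\mapsto\phi^{\circ z+t}(\xi)$, $t\ge0$, lies in the basin of attraction, is connected, and accumulates at $\xi_0$, so since $\A_{\xi_0}$ is open it eventually enters $\A_{\xi_0}$ and hence lies entirely in that component. This places $\phi^{\circ z}(\xi)\in\A_{\xi_0}$ and completes the proof.
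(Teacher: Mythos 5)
Your proposal is correct and follows the paper's skeleton---Lemma \ref{lma4} as the base case, Lemmas \ref{lm5} and \ref{lm6} to control the $\xi$-derivatives and to rearrange $\vartheta$ as a Taylor series about $\zeta$, and induction along a finite chain of disks through $\A_{\xi_0}$---but you substitute a different device at the central step. The paper propagates differintegrability by integrating the partial sums $\vartheta_j$, passing to the limit to obtain $e^{i\theta}\int_0^\infty\vartheta(-e^{i\theta}w,\xi)\,dw=\xi$, and then invoking Theorem \ref{thmDiff} to upgrade plain integrability to the weighted condition of Definition \ref{df3}; you instead dominate the rearranged series by the geometric majorant $g(-e^{i\theta}w)\left(1-|\xi-\zeta|/\rho\right)^{-1}$ supplied by the Cauchy estimates and conclude by dominated convergence. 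Your route never needs Theorem \ref{thmDiff} and makes the sum--integral interchange explicit, at the price of leaning on the integrability of $g(w)=\sup_{\xi\in\Omega}|\vartheta(w,\xi)|$---which is precisely the claim ($g(-e^{i\theta}w)w\to0$) already asserted inside the proof of Lemma \ref{lm5}, so you are no worse off than the paper there. The endgame also differs in order: the paper first proves $\phi^{\circ z}(\A_{\xi_0})\subseteq\A_{\xi_0}$ by applying maximality of the immediate basin to the connected open image $\phi^{\circ z}(\A_{\xi_0})$, and only then factors the exponent law; you state the general law $\phi^{\circ z_1}\circ\phi^{\circ z_2}=\phi^{\circ z_1+z_2}$ before locating the image of $\phi^{\circ z_2}$, which is mildly circular, since the factoring argument in $z_1$ already presupposes that $\phi^{\circ z_2}(\xi)$ lies where $\phi^{\circ z_1}$ is defined. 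You should, as the paper does, first settle the case where one exponent is a natural number (for which $\phi^{\circ n}$ is defined on all of $G$ and the identity follows by analytic continuation in $\xi$ from $B$), use that to run your orbit-curve argument placing $\phi^{\circ z}(\xi)$ in $\A_{\xi_0}$, and only then assert the general law; with that reordering the argument is sound.
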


\begin{proof}
By Lemma \ref{lm6} we know that $\vartheta$ can be rewritten as $$\vartheta(w,\xi)= \sum_{k=0}^\infty \Big{(}\D{\xi}{k}\Big{|}_{\xi = \xi_0} \vartheta(w,\xi)\Big{)}\frac{(\xi - \xi_0)^k}{k!}$$for $\xi$ in some disk $D_0$ about $\xi_0$ contained in $\A_{\xi_0}$. We also have by Lemma \ref{lm5} that $\int_0^\infty \Big{|}\D{\xi}{k}\Big{|}_{\xi=\xi_0}\vartheta(-we^{i\theta},\xi)\Big{|}w^{\sigma - 1}\,dw < \infty$ for $\sigma,\theta \in \mathbb{R}$, $0 < \sigma < 1$ and $|\theta| < \pi/2$. Define the partial sums of $\vartheta$ as $\vartheta_j(w,\xi) = \sum_{k=0}^j \Big{(}\D{\xi}{k}\Big{|}_{\xi = \xi_0}\vartheta(w,\xi)\Big{)} \frac{(\xi - \xi_0)^k}{k!}$. This shows that $\int_0^\infty |\vartheta_j(-e^{i\theta}w,\xi)|\,dw < \infty$ for all $j$. 

Observe the following identity. Since $\phi^{\circ z}(\xi) = \int_0^\infty \vartheta(-e^{i\theta} w,\xi)w^{-z} \, dw$ for $0 < \Re(z) < 1$, allow $z \to 0$ and we must have:

$$e^{i\theta}\int_0^\infty \vartheta(-e^{i\theta}w,\xi) \, dw = \xi$$

Taking $\vartheta_i$, observe that $e^{i\theta} \int_0^\infty \vartheta_j(-we^{i\theta},\xi)\,dw = \xi_0 + (\xi - \xi_0)$ for $j \ge 2$ and $\xi \in D_0$, an arbitrary disk inside of $\A_{\xi_0}$ about $\xi_0$. Therefore:

$$e^{i\theta} \int_0^\infty \lim_{j\to\infty}\vartheta_j(-we^{i\theta},\xi)\,dw = \xi_0 + (\xi - \xi_0)$$

by Theorem \ref{thmDiff} it must be for all $0 < \sigma <1$:

$$\int_0^\infty |\vartheta(-we^{i\theta},\xi)|w^{\sigma}\,dw < \infty$$

for all $\xi \in D_0$ which shows the base step of induction.

We will now imitate this method of proof on all elements of $\A_{\xi_0}$. Each lemma was proven so that the steps that follow come together piece by piece in simple procession of each other. We go by induction on how many intersecting open disks inside of $\A_{\xi_0}$ it takes to chain $\xi_0$ to $\xi$. The case where an element is inside a disk inside of $\A_{\xi_0}$ centered about $\xi_0$ is our induction hypothesis; we know it is differintegrable for such $\xi$. Let us assume that it takes $m$ links in the chain of open disks to get to $\xi_m$ from $\xi_0$ and that $\int_0^\infty |\vartheta(-we^{i\theta},\xi_m)|w^{\sigma-1}\,dw<\infty$. 

Take a disk $D_{m}$ centered about $\xi_m$ that is still within $\A_{\xi_0}$. For $k \in \mathbb{N}$ we know that $\int_0^\infty |\D{\xi}{k}\Big{|}_{\xi = \xi_m}\vartheta(-we^{i\theta},\xi) |\,dw<\infty$. This follows by Lemma \ref{lm5}. Expand $\vartheta$ into a Taylor series about $\xi_m$, and exactly as we knew for the base case we know that if $\xi_{m+1} \in D_{m}$ that

$$\vartheta(w,\xi_{m+1}) = \sum_{k=0}^\infty \D{\xi}{k}\Big{|}_{\xi = \xi_m}\vartheta(w,\xi) \frac{(\xi_{m+1} - \xi_m)^k}{k!}$$

which follows by Lemma \ref{lm6}. We know this expression has a convergent differintegral in $w$ which follows just as the base case did (the beginning of this proof). Therefore the induction process is complete. We know that every point $\xi \in \A_{\xi_0}$ has a finite chain of intersecting disks inside of $\A_{\xi_0}$ which connect it to $\xi_0$. We know that we have convergence for every member of $\A_{\xi_0}$. This shows for all $\Re(z) > 0$ and for all $\xi \in \A_{\xi_0}$,

$$\phi^{\circ z}(\xi) = \D{w}{z-1}\Big{|}_{w=0} \vartheta(w,\xi)$$ or written in explicit form with no mention of the differintegral:

$$\phi^{\circ z}(\xi) = \frac{1}{\G(1-z)} \Big{(}\sum_{n=0}^\infty \phi^{\circ n+1}(\xi)\frac{(-1)^n}{n!(n+1-z)} + \int_1^\infty \Big{(}\sum_{n=0}^\infty \phi^{\circ n+1}(\xi) \frac{(-w)^n}{n!}\Big{)} w^{-z}\,dw\Big{)}$$

This is an analytic continuation of the original expression from Lemma \ref{lma4}, however we are unsure of the domains it is defined on. We will show that $\phi^{\circ z} (\xi) : \mathbb{C}_{\Re(z) > 0} \times \A_{\xi_0} \to \A_{\xi_0}$. 

Take a compact and connected set $\Omega \subset \A_{\xi_0}$ such that $\xi_0 \in \Omega$ and $B \subset \Omega$ where $B$ is the simply connected region from Lemma \ref{lma4}. There exists an $N$ such that for $n>N$ and $\xi \in \Omega$ we have $\phi^{\circ n}(\xi) \in B$. Now, $\phi^{\circ z} : B \to B$ and $\phi^{\circ z}(\phi^{\circ n}(\xi')) = \phi^{\circ z+n}(\xi')$ for $\xi' \in B$. Therefore by analytic continuation and since $\phi^{\circ n}(\xi) \in B$, $\phi^{\circ z}(\phi^{\circ n}(\xi))=\phi^{\circ z+n}(\xi)$ for $\xi \in \Omega$. Therefore $\phi^{\circ z+n}:\Omega \to \A_{\xi_0}$. Notice similarly that for $\xi' \in B$ we have $\phi^{\circ n}(\phi^{\circ z}(\xi')) = \phi^{\circ z+n}(\xi')$ which implies by analytic continuation and since $\phi^{\circ z+n} : \Omega \to \A_{\xi_0}$ that $\phi^{\circ n}(\phi^{\circ z}): \Omega \to \A_{\xi_0}$. This tells us for $\zeta \in \phi^{\circ z}(\A_{\xi_0})$ we know $\phi^{\circ n} (\zeta) \to \xi_0$ as $n \to \infty$, we know $\xi_0 \in \phi^{\circ z}(\A_{\xi_0})$ and we know $\phi^{\circ z}(\A_{\xi_0})$ is connected and open. Therefore $\phi^{\circ z}(\A_{\xi_0}) \subseteq \A_{\xi_0}$ by the fact $\A_{\xi_0}$ is the maximal set which satisfies these properties.

To conclude the proof we show $\phi^{\circ z_1} (\phi^{\circ z_2}(\xi)) = \phi^{\circ z_1 + z_2}(\xi)$. First note that both functions are well defined. Because $\phi^{\circ z}(\xi) \to \xi_0$ as $\Re(z) \to \infty$ and $\phi^{\circ z}$ is bounded as $|\Im(z)| \to \infty$--both functions $\phi^{\circ z_1}(\phi^{\circ z_2})$ and $\phi^{\circ z_1 + z_2}$ can be factored in $z_1, z_2$. Then note that because $\phi^{\circ z_1} (\phi^{\circ z_2}(\xi)) = \phi^{\circ z_1 + z_2}(\xi)$ for $z_1,z_2 \in \mathbb{N}$ they equal for all $\Re(z_1),\Re(z_2) > 0$. 
\end{proof}

\section{On the solution of Tetration for bases $1 < \alpha < e^{1/e}$}\label{sec4}

\setcounter{section}{4}
\setcounter{equation}{0}\setcounter{theorem}{0}

In this section we discuss an applied problem we can solve using our differintegral and the techniques we have just developed. The problem is recent in history and is of considerable difficulty. We will reduce the difficulty using our recently developed tools and show how problems of this type can be handled with Theorem \ref{thm2}. This section is a warm up, and precursor to the next which generalizes the problem we face in this section.

In order to phrase the question we must encourage the definition of tetration. Take a positive real number $\alpha$ and consider building an exponential tower from it, indexing the number of steps in the tower like a sequence. The terms of the sequence would be $^0 \alpha = 1,\, ^1 \alpha = \alpha,\, ^2 \alpha = \alpha^\alpha,\, ^3 \alpha = \alpha^{\alpha^\alpha},\, ^4 \alpha = \alpha^{\alpha^{\alpha^\alpha}},\,...,\,^n \alpha = \alpha^{^{n-1}\alpha},\,...$. 

The question arises from this sequence: can we find a holomorphic function for $\Re(z) > 0$ such that $^{z+1} \alpha = \alpha^{^{z} \alpha}$ and interpolates such a sequence? The answer is yes, and there exists many non-unique ways of generating these tetration functions. In fact we can always take $f(z) = \,^{z + \theta(z)} \alpha$ for some one periodic function $\theta$ with $\theta(0) = 0$, and this will be another solution to tetration. To our advantage though, our solution is the only one that can be factored. Therefore for the bases of the tetration function we can solve for we have a uniqueness criterion.

\begin{definition}\label{df5}
A holomorphic function $\mathcal{F}:\mathbb{C}_{\Re(z) > 0}\to\mathbb{C}$ is a tetration function base $\alpha\in \mathbb{R}^+$ iff $\mathcal{F}(1) = \alpha$ and $\mathcal{F}(z+1) = \alpha^{F(z)}$.
\end{definition}

In order to devise a solution to tetration we need to generalize the problem slightly. We will focus on performing iterates of the function $\alpha^{\xi}$ around a real fixed point, where the complex iterates when $\xi = 1$ will be tetration. In order for an exponential function $f(\xi) = \alpha^\xi$ to have a fixed point $\xi_0$ such that $f'(\xi_0) \in \mathbb{R},\,0<f'(\xi_0) < 1$ we must restrict our base to $1 < \alpha < e^{1/e}$, since for each $1<\alpha<e^{1/e}$ there exists a $1 < \beta < e$ such that $f(\beta) = \beta$ and $0<f'(\beta)<1$.

\begin{lemma}\label{lm8}
If $\alpha,\beta \in \mathbb{R}^+$, $1< \beta < e$ and $\alpha = \beta^{1/\beta}$ then the entire function $f(\xi) = \alpha^{\xi}$ has a fixed point at $\xi = \beta$ and $0 < f'(\beta) = \ln\beta < 1$.
\end{lemma}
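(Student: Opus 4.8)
The plan is to verify the two claims — the fixed point and the value of the multiplier — by direct computation, since both follow immediately from the defining relation $\alpha = \beta^{1/\beta}$. The lemma is elementary, so the bulk of the work is simply bookkeeping with exponents and logarithms.

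First I would check the fixed-point condition by substitution. Using $\alpha = \beta^{1/\beta}$ we obtain $f(\beta) = \alpha^\beta = (\beta^{1/\beta})^\beta = \beta^{\beta/\beta} = \beta$, so $\xi = \beta$ is indeed fixed by $f$. Here I use that $\beta > 1 > 0$, so $\beta^{1/\beta}$ and $f(\xi) = \alpha^\xi = e^{\xi\ln\alpha}$ are unambiguous real (and in fact entire) exponentials.

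Next I would compute the multiplier $f'(\beta)$. Differentiating gives $f'(\xi) = \alpha^\xi \ln\alpha$. The key observation is that $\ln\alpha = \frac{1}{\beta}\ln\beta$, read off directly from $\alpha = \beta^{1/\beta}$. Evaluating at the fixed point and using $\alpha^\beta = \beta$ from the previous step yields
\[ f'(\beta) = \alpha^\beta \ln\alpha = \beta \cdot \frac{\ln\beta}{\beta} = \ln\beta. \]
Finally I would establish the bound. Since $\ln$ is strictly increasing on $\mathbb{R}^+$ and $1 < \beta < e$, monotonicity gives $\ln 1 < \ln\beta < \ln e$, i.e.\ $0 < \ln\beta < 1$, which is exactly $0 < f'(\beta) < 1$.

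There is no genuine obstacle here: the statement is an immediate consequence of the algebraic identity $\alpha = \beta^{1/\beta}$ together with the monotonicity of the logarithm. The only point worth a word of care is that the interval $(1,e)$ is open, so the inequalities $0 < f'(\beta) < 1$ are strict. This strictness is precisely what guarantees $\beta$ is an attracting fixed point with a positive real multiplier, the exact hypothesis required to invoke Koenig's Linearization Theorem (Theorem \ref{lma1}) and hence Theorem \ref{thm2} in the construction of the tetration function that follows.
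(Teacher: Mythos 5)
Your proposal is correct and follows essentially the same route as the paper: substitute to verify $f(\beta)=\beta$, differentiate to get $f'(\beta)=\alpha^\beta\ln\alpha=\ln\beta$, and conclude $0<\ln\beta<1$ from $1<\beta<e$. You merely spell out the intermediate identity $\ln\alpha=\tfrac{1}{\beta}\ln\beta$, which the paper leaves implicit.
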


\begin{proof}
Plug in $f(\beta)$ and observe it fixes $\beta$. Take the derivative and observe it is $f'(\beta)= \ln\beta$, which is between zero and one since $1 < \beta < e$.
\end{proof}

From here we go to show that $1$ is in the immediate basin of attraction about $\beta$; that $1 \in \A_{\beta}$. This will imply $f$'s complex iterates are tetration through simple manipulation of the definitions.

\begin{lemma}\label{lm9}
If $\alpha,\beta \in \mathbb{R}^+$ and $1< \beta < e$ and $\alpha = \beta^{1/\beta}$ then for the entire function $f(\xi) = \alpha^{\xi}$ with fixed point $\xi = \beta$, $1\in \A_{\beta}$.
\end{lemma}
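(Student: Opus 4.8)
The plan is to reduce the statement to an elementary exercise in the real dynamics of $f$ on the interval $[1,\beta]$, and then upgrade the resulting convergence to membership in the \emph{immediate} basin by a connectedness argument. First I would record the basic shape of $f(\xi) = \alpha^\xi$ on the real line: since $\alpha = \beta^{1/\beta} > 1$, $f$ is strictly increasing and strictly convex, and by Lemma \ref{lm8} it fixes $\beta$ with $0 < f'(\beta) = \ln\beta < 1$. I would also note that $f(1) = \alpha = \beta^{1/\beta}$ satisfies $1 < \alpha < \beta$, because $1/\beta < 1$ together with $\beta > 1$ forces $\beta^{1/\beta} < \beta$.

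The heart of the argument is to show that the iterates $f^{\circ n}(1)$ increase monotonically to $\beta$. To control the sign of $f(\xi) - \xi$ on $[1,\beta)$, set $g(\xi) = \alpha^\xi - \xi$. Then $g'(\xi) = \alpha^\xi \ln\alpha - 1$ is strictly increasing, since $g''(\xi) = \alpha^\xi (\ln\alpha)^2 > 0$, and $g'(\beta) = \beta\ln\alpha - 1 = \ln\beta - 1 < 0$ because $\beta < e$. Hence $g' < 0$ throughout $(-\infty,\beta]$, so $g$ is strictly decreasing there and $g(\xi) > g(\beta) = 0$ for every $\xi < \beta$. Combined with the monotonicity of $f$, which gives $f(\xi) < f(\beta) = \beta$ for $\xi < \beta$, this shows that $[1,\beta)$ is carried into itself with $\xi < f(\xi) < \beta$ throughout. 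Starting from $\xi_0 = 1$, the sequence $\xi_{n+1} = f(\xi_n)$ is therefore increasing and bounded above by $\beta$; its limit is a fixed point lying in $[1,\beta]$, and since $\beta$ is the only fixed point of $f$ in that interval, we conclude $f^{\circ n}(1) \to \beta$.

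Finally I would pass from the basin to the immediate basin. The monotone argument above applies verbatim to every point of $[1,\beta)$, so the whole segment $[1,\beta]$ lies in the basin of attraction of $\beta$, which is open. Since $[1,\beta]$ is connected and contains $\beta$, it is contained in the connected component of the basin through $\beta$, which by Definition \ref{def4} is precisely $\A_\beta$. Hence $1 \in \A_\beta$. The only genuinely delicate point is this last step: the convergence $f^{\circ n}(1) \to \beta$ alone merely places $1$ in the (a priori possibly disconnected) basin, and it is the connectedness of the real segment joining $1$ to $\beta$, together with the maximality built into Definition \ref{def4}, that secures membership in the \emph{immediate} basin. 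Everything else is routine calculus of a convex increasing map.
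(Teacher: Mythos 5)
Your proof is correct and follows essentially the same route as the paper's: establish that the real iterates of $1$ under $f$ converge to $\beta$, then use the connectedness of the segment $[1,\beta]$ together with the openness of the basin and the maximality in Definition \ref{def4} to conclude $1 \in \A_\beta$. The only difference is that you actually prove the monotone convergence on $[1,\beta)$ via the sign of $g(\xi) = \alpha^{\xi} - \xi$, a step the paper's proof merely asserts.
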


\begin{proof}
First note for all $1-\epsilon < x_0 < \beta$ it follows $ f^{\circ n} (x_0) \to\beta$ as $n \to \infty$. Elements in open disks in $\mathbb{C}$ about $x_0$ satisfies this as well. The value $\beta$ is connected to $1$ with open balls, of which all elements $f^{\circ n} \to \beta$. Since $\A_{\beta}$ is the maximal set to satisfy these properties this chain of open balls is in $\A_{\beta}$. Thus, $1 \in \A_\beta$.
\end{proof}

With these results we can state the unique solution to tetration in one clean formula.

\begin{theorem}\label{thm3}
If $\alpha \in \mathbb{R}^+$ and $1 < \alpha < e^{1/e}$ then tetration base $\alpha$ is given by,
$$^z \alpha = \frac{1}{\G(1-z)} \Big{(}\sum_{n=0}^\infty \,(^{n+1} \alpha )\frac{(-1)^n}{n!(n+1-z)} + \int_1^\infty \big{(}\sum_{n=0}^\infty \,(^{n+1} \alpha) \frac{(-w)^n}{n!} \big{)}w^{-z}\,dw\Big{)}$$
\end{theorem}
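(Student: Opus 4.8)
The plan is to recognize tetration base $\alpha$ as the complex iteration, in the sense of Theorem \ref{thm2}, of the single entire map $f(\xi) = \alpha^{\xi}$ seeded at the point $\xi = 1$, so that the stated formula is nothing more than the explicit continued form of $\phi^{\circ z}(\xi)$ from Theorem \ref{thm2} specialized to this $f$ and evaluated at $\xi = 1$. First I would pin down the fixed point: given $\alpha \in (1,e^{1/e})$, the strictly monotone correspondence $\beta \mapsto \beta^{1/\beta}$ on $(1,e)$ (recorded just before Lemma \ref{lm8}) produces a unique $\beta \in (1,e)$ with $\alpha = \beta^{1/\beta}$. Lemma \ref{lm8} then supplies that $f(\xi) = \alpha^{\xi}$ is entire, fixes $\beta$, and has real multiplier $f'(\beta) = \ln\beta \in (0,1)$, while Lemma \ref{lm9} gives $1 \in \A_\beta$. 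These are exactly the hypotheses of Theorem \ref{thm2}, taken with $G = \mathbb{C}$ and $\xi_0 = \beta$, so the complex iterate $f^{\circ z}(\xi) : \mathbb{C}_{\Re(z) > 0} \times \A_\beta \to \A_\beta$ exists, is holomorphic in $z$, and is given explicitly by the differintegral of $\vartheta(w,\xi) = \sum_{n=0}^\infty f^{\circ n+1}(\xi)\frac{w^n}{n!}$. Because $1 \in \A_\beta$, evaluation at $\xi = 1$ is licensed.

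Next I would set ${}^z\alpha := f^{\circ z}(1)$ and verify that this meets Definition \ref{df5}. It is a holomorphic map $\mathbb{C}_{\Re(z)>0}\to\mathbb{C}$; at $z=1$ one has $f^{\circ 1}(1) = \alpha$; and the semigroup identity $f^{\circ 1}(f^{\circ z}(\xi)) = f^{\circ z+1}(\xi)$ furnished by Theorem \ref{thm2}, applied with argument $f^{\circ z}(1) \in \A_\beta$, reads $\alpha^{({}^z\alpha)} = {}^{z+1}\alpha$. Thus ${}^z\alpha$ is a tetration function base $\alpha$. A short induction, using $f^{\circ 0}(1) = 1$ and $f^{\circ n+1}(1) = f(f^{\circ n}(1)) = \alpha^{f^{\circ n}(1)}$, shows $f^{\circ n}(1) = {}^n\alpha$ for every natural $n$, so the Taylor coefficients of $\vartheta(w,1)$ are precisely $f^{\circ n+1}(1) = {}^{n+1}\alpha$. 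Substituting $\xi = 1$ into the explicit form of $\phi^{\circ z}(\xi)$ at the end of Theorem \ref{thm2} then reproduces the displayed formula term for term.

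The substantive analytic content all resides in the earlier lemmas and in Theorem \ref{thm2}; what remains here is verification, so the delicate point is not any single estimate but the clean matching of hypotheses. Concretely, I would take care that $f$, viewed as a holomorphic self-map of $G = \mathbb{C}$, is genuinely the $\phi$ of Theorem \ref{thm2}, that the seed $\xi = 1$ truly lies in $\A_\beta$ by Lemma \ref{lm9} (so evaluation at $\xi = 1$ is justified), and that the iteration semigroup law is invoked only for arguments that remain inside $\A_\beta$, which Theorem \ref{thm2} guarantees since $f^{\circ z}$ maps $\A_\beta$ into itself. Once these bookkeeping points are secured, the theorem follows at once, with no further convergence analysis needed beyond that already carried out in establishing Theorem \ref{thm2}.
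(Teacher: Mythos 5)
Your proposal is correct and follows essentially the same route as the paper: invoke Lemma \ref{lm8} and Lemma \ref{lm9} to obtain the fixed point $\beta$ with $f'(\beta)=\ln\beta\in(0,1)$ and $1\in\A_\beta$, apply Theorem \ref{thm2} to $f(\xi)=\alpha^{\xi}$, and evaluate at $\xi=1$. Your write-up merely makes explicit the bookkeeping (verification of Definition \ref{df5} and the identification $f^{\circ n}(1)={}^{n}\alpha$) that the paper leaves implicit.
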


\begin{proof}
Appeal to Theorem \ref{thm2} and Lemma \ref{lm9} and Lemma \ref{lm8}. By this we mean, iterate the function $f(\xi) = \alpha^{\xi}=\beta^{\xi/\beta}$ with fixed point $\beta$ and for $\xi \in \A_{\beta}$ we have 
$$f^{\circ z}(\xi) = \frac{1}{\G(1-z)} \Big{(}\sum_{n=0}^\infty \,f^{\circ n+1}(\xi) \frac{(-1)^n}{n!(n+1-z)} + \int_1^\infty \big{(}\sum_{n=0}^\infty \,f^{\circ n+1}(\xi) \frac{(-w)^n}{n!} \big{)}w^{-z}\,dw\Big{)}$$
Setting $\xi =1$ is the solution of tetration since $f^{\circ 1}(1) = \alpha$ and $ f^{\circ z+1}(1) = f(f^{\circ z}(1)) = \alpha^{f^{\circ z}(1)}$.
\end{proof}

\section{Bounded Analytic Hyper-Operators}\label{sec5}

\setcounter{section}{5}
\setcounter{equation}{0}\setcounter{theorem}{0}

In this section we move from tetration, to the more general concept of hyper-operations. This section is the goal of our paper, and everything before it has been leading up to this result. It shows there exists a sequence of real analytic functions that are recursively equivalent to the hyper-operators defined on the natural numbers. This sequence will contain addition, multiplication and exponentiation.

Hyper-operators express a recursive relationship that, deceptively, is easy to define but becomes much more complicated upon closer analysis. The level of recursion grows for each operator, and takes more and more time to calculate. In the case of natural numbers, these functions grow too fast to evaluate economically, even with a computer. We will stray from what we usually call hyper-operators, however the sequence of functions we construct rightfully still deserve to be called hyper-operators. 

It is quite ironic that hyper-operators are known for their fast growth at infinity and that our extension $\alpha \up^n x$ for $n \ge 2$ is bounded on the positive real line approaching a constant at infinity. Thus a sequence of functions satisfying the recursive pattern of hyper-operators need not necessarily grow unbounded. Its recursion can be satisfied with a sequence of analytic functions in two variables $\alpha \up^n x$ for $\alpha \in (1,e^{1/e})$ and $x \in \mathbb{R}^+$ that sends to $(1,e)$ for $n \ge 2$.

 The hyper-operators are usually defined as binary operators on the natural numbers $\{a+1, a+b, a\cdot b, a \up b = a^b, ...\}$. A sequence, starting from successorship, where each operator is the iterate of the previous operator. In colloquy, addition $a + b$ is iterated succesorship $a + 1...(b\, times)...+ 1$, multiplication $a \up^0 b = a\cdot b$ is iterated addition $a+...(b\, times)...+ a$, exponentiation $a\up b$ is iterated multiplication $a\up^0...(b\, times)...\up^0 a$, tetration $a\up^2 b$ is iterated exponentiation $a\up ...(b\, times)...\up a$, pentation $a\up^3b$ is iterated tetration $a\up^2...(b\, times)...\up^2a$, etc... This idea is more formally stated through a nested recursion, $a\up^n(a \up^{n+1} b)= a\up^{n+1} (b+1)$, and for $n \ge 0$ we have $a \up^n 1 = a$.

Note that the hyperoperators $\up^0, \up$ are defined for complex arguments and continue to satisfy the above recursion. We continue in such a manner and produce for $1 <\alpha < e^{1/e}$ an analytic extension $\alpha \up^n z$ that is holomorphic in $z$ for $\Re(z) > 0$, $n\ge 0$ that satisfies $\alpha \up^n (\alpha \up^{n+1} x) = \alpha \up^{n+1} (x+1)$ for $x \in \mathbb{R}^+$ and $\alpha \up^n 1 = \alpha$.

The hyper-operators are related to the previous sections through a general construction. A sequence of functions $\{f_n(z)\}_{n=0}^\infty$ that satisfy $f_{n+1}(z) = f_n^{\circ z}(1)$ turn out to satisfy the recursion that hyper-operators satisfy. Stated explicitly, $f_n(f_{n+1}(z)) = f_{n+1}(z+1)$. It is not difficult to see then that the problem of analytically continuing $\alpha\up^n z$ is similar to the problem we were just investigating. We want to find a tower of iterates, $f_n$, where the base function $f_0 = \alpha \cdot z$. This type of problem, finding $f_n$, can be solved generally using the techniques we apply below. However we restrict the case to $f_0 = \alpha \cdot z$ and lead by example.

The solution of tetration from Section \ref{sec4} was a solution to the function $f_2(z) = \alpha \up^2 z$. We proceed by induction, noticing all the techniques from Section \ref{sec4} we applied on exponentiation ($f_1(z)$) to solve for its complex iterate can be applied to tetration ($f_2(z)$). Allowing us to find $f_2^{\circ z}(1) = f_3(z) = \alpha \up^3 z$. We continue on, and develop a solution to each function $\alpha \up^n z$ one step at time. In the end we are given a closed form expression for $\alpha \up^n z$ when $1 \le \alpha \le e^{1/e}$ and $\Re(z) > 0$.

The following lemma comes in hand when we attempt to iterate real positive to real positive and monotone growing functions (of which the hyper-operators are). It shows that the iterate of certain real positive to real positive and monotone growing functions is a real positive to real positive and monotone growing function. This allows us to say that the operator $\up \phi(z) = \phi^{\circ z} (1)$ takes monotone functions to monotone functions, a result that expresses more than what we use it for.

\begin{lemma}\label{lm11}
Let $\phi:G \to G$ be a holomorphic function on open $G$. Let $\xi_0 \in \mathbb{R}^+$ be a fixed point of $\phi$ which satisfies $0 < \phi'(\xi_0) < 1$. Further, let $\phi:\mathbb{R}^+ \to \mathbb{R}^+$ and $\phi'(\mathbb{R}^+) \ge 0$ and $(0,\xi_0) \subset \A_{\xi_0}$. Then $\phi^{\circ t}(\xi): \mathbb{R}^+ \times (0,\xi_0)\to (0,\xi_0)$ and $\D{t}{}\phi^{\circ t}\ge 0$.
\end{lemma}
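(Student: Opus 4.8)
The plan is to set $F(t) = \phi^{\circ t}(\xi)$ for fixed $\xi \in (0,\xi_0)$ and study it as a real-analytic function of $t$. First I would record the facts Theorem \ref{thm2} already hands us: $\phi^{\circ z}(\xi)$ is holomorphic in $z$ on $\mathbb{C}_{\Re(z)>0}$, satisfies the flow identity $\phi^{\circ t+s} = \phi^{\circ s}\circ\phi^{\circ t}$, obeys $\phi^{\circ z}(\A_{\xi_0})\subseteq\A_{\xi_0}$, and tends to $\xi_0$ as $\Re(z)\to\infty$; moreover the identity $e^{i\theta}\int_0^\infty \vartheta(-e^{i\theta}w,\xi)\,dw = \xi$ from that proof gives $F(0^+)=\xi$. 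Since $\phi$ maps reals to reals, the coefficients $\phi^{\circ n+1}(\xi)$ are real for real $\xi$, so the explicit formula of Theorem \ref{thm2} shows $F(t)\in\mathbb{R}$ for $t\in\mathbb{R}^+$. I would also note that $(0,\xi_0)\subset\A_{\xi_0}$ forbids any second fixed point of $\phi$ in $(0,\xi_0)$ (such a point could not converge to $\xi_0$), which together with $\phi'(\xi_0)<1$ forces $\phi(\eta)>\eta$ throughout $(0,\xi_0)$.

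The heart of the argument is the infinitesimal generator $V(\eta) = \D{t}{}\big|_{t=0^+}\phi^{\circ t}(\eta)$. Near $\xi_0$ I would compute it from the Koenigs/Schr\"oder data of Theorem \ref{lma1} and Lemma \ref{lma4}: taking the Koenigs function $\Psi$ real on reals (it is the real-analytic limit $\lim_n (\phi^{\circ n}(\eta)-\xi_0)/\lambda^n$), the local form $\phi^{\circ t}(\eta)=\Psi^{-1}(\lambda^t\Psi(\eta))$ gives $V(\eta)=\ln(\lambda)\,\Psi(\eta)/\Psi'(\eta)$, which near $\xi_0$ behaves like $\ln(\lambda)(\eta-\xi_0)$; as $\lambda=\phi'(\xi_0)\in(0,1)$ and $\eta<\xi_0$, both factors are negative and $V>0$ just below $\xi_0$. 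To propagate this, I differentiate $\phi^{\circ t}(\phi(\eta))=\phi(\phi^{\circ t}(\eta))$ at $t=0^+$ to obtain the functional equation $V(\phi(\eta))=\phi'(\eta)V(\eta)$. Since every $\eta\in(0,\xi_0)$ has forward orbit entering the region where $V>0$, and $\phi'\ge0$, the iterated relation $V(\phi^{\circ n}(\eta))=(\phi^{\circ n})'(\eta)\,V(\eta)$ forces $(\phi^{\circ n})'(\eta)\neq0$ and $V(\eta)>0$ for all $\eta\in(0,\xi_0)$.

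Differentiating the flow identity $F(t+s)=\phi^{\circ s}(F(t))$ in $s$ at $s=0^+$ then yields the autonomous equation $\D{t}{}F(t)=V(F(t))$, valid as long as $F(t)\in(0,\xi_0)$; I would justify the interchange for general $t$ by transferring from a large iterate $\phi^{\circ n}(F(t))$ sitting near $\xi_0$ via the chain rule and the functional equation for $V$. The proof then closes with a bootstrap on the maximal interval $[0,T)$ on which $F$ stays in $(0,\xi_0)$: there $\D{t}{}F=V(F)>0$, so $F$ is strictly increasing and confined to $[\xi,\xi_0)$. If $T$ were finite, the monotone bounded limit $L=\lim_{t\to T^-}F(t)$ would lie in $(\xi,\xi_0]$; the case $L<\xi_0$ contradicts maximality of $T$ by continuity, while $L=\xi_0$ would make $F(t)=\xi_0$ for $t\ge T$, hence $F\equiv\xi_0$ by the identity theorem for the holomorphic map $t\mapsto F(t)$, contradicting $F(0^+)=\xi$. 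Thus $T=\infty$, giving simultaneously $\phi^{\circ t}(\xi)\in(0,\xi_0)$ and $\D{t}{}\phi^{\circ t}\ge0$.

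The main obstacle I anticipate is establishing positivity of the generator $V$ on the whole interval rather than merely near the fixed point, and, hand in hand with it, the confinement of the real flow to $(0,\xi_0)$: these two facts are entangled (monotonicity needs confinement in order to invoke $V>0$, and confinement needs monotonicity in order to stay below $\xi_0$), so they must be unlocked together by the maximal-interval bootstrap rather than proved in isolation. A secondary technical point requiring care is the existence of the one-sided $t$-derivative of the flow at $t=0^+$ away from $\xi_0$, which I handle by conjugating to the Schr\"oder coordinate where the flow is manifestly $\lambda^t$ and then spreading that regularity along orbits through the functional equation.
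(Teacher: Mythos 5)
Your argument is sound in outline and reaches the lemma, but it travels a genuinely different road from the paper's. You build the infinitesimal generator $V(\eta)=\D{t}{}\big|_{t=0^+}\phi^{\circ t}(\eta)$ from the Koenigs coordinate, propagate its positivity along orbits via $V(\phi^{\circ n}(\eta))=(\phi^{\circ n})'(\eta)\,V(\eta)$, and close with the ODE $\D{t}{}F=V(F)$ and a maximal-interval bootstrap. The paper never introduces a generator: it fixes a finite $\delta>0$, computes the \emph{spatial} derivative of the fractional iterate at the fixed point directly from the differintegral, $\D{\xi}{}\big|_{\xi=\xi_0}\phi^{\circ\delta}(\xi)=\phi'(\xi_0)^{\delta}\in(0,1)$ (your Koenigs computation in different clothing), propagates $\D{\xi}{}\phi^{\circ\delta}\ge 0$ down the interval by exactly your commutation-plus-chain-rule trick applied to $\phi^{\circ n}\circ\phi^{\circ\delta}=\phi^{\circ\delta}\circ\phi^{\circ n}$, and then gets monotonicity in $t$ by a purely order-theoretic step: $\phi^{\circ k\delta}(\xi)$ increases to $\xi_0$, hence $\phi^{\circ\delta}(\xi)>\xi$, hence $\phi^{\circ t+\delta}(\xi)>\phi^{\circ t}(\xi)$ for every $\delta>0$, so $\D{t}{}\phi^{\circ t}\ge0$ from difference quotients with no differential equation and no $t$-derivative at $0^+$. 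Your route buys a cleaner dynamical picture (an explicit vector field and strict monotonicity), but it carries the two burdens you correctly anticipate, and one of them deserves a sharper warning: the functional equation $V(\phi^{\circ n}(\eta))=(\phi^{\circ n})'(\eta)V(\eta)$ presupposes that $V(\eta)$ already exists, whereas you only know $V$ exists near $\xi_0$ and are trying to spread it \emph{backward} along the orbit; as written this is circular. It is repairable --- either invert $\phi^{\circ n}$ locally once $(\phi^{\circ n})'(\eta)\neq0$ is secured, or avoid $t=0^+$ altogether by differentiating $\phi^{\circ t+n}(\eta)=\phi^{\circ n}(\phi^{\circ t}(\eta))$ at an interior $t_0>0$, where holomorphy of $z\mapsto\phi^{\circ z}(\eta)$ from Theorem \ref{thm2} hands you the derivative for free --- but that repair is essentially the paper's decision to work only with derivatives that exist automatically, and you should make it explicit rather than leave it to ``spreading regularity.''
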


\begin{proof}
Take $\delta > 0$ and observe that $0<\D{\xi}{}\Big{|}_{\xi = \xi_0}\phi^{\circ \delta}(\xi) = \phi'(\xi_0)^\delta < 1$ by evaluating the differintegral. Exchanging the limits and since $\D{\xi}{}\Big{|}_{\xi = \xi_0} \phi^{\circ k}(\xi) = \phi'(\xi_0)^k$ we have $\D{w}{\delta} \Big{|}_{w=0} \sum_{k=0}^\infty \phi'(\xi_0)^k \frac{w^k}{k!} = \phi'(\xi_0)^\delta$. Since $\phi^{\circ \delta}(\mathbb{R}^+) = \mathbb{R}$ and $\phi^{\circ \delta}(\xi_0) = \xi_0$ we can say $\phi^{\circ \delta}(\zeta) \in \mathbb{R}^+$ for $\zeta \in (\xi_0 -\epsilon, \xi_0)$ for some $\epsilon >0$. Further we can say $\D{\zeta}{}\phi^{\circ \delta}(\zeta) >0$ for $\zeta \in (\xi_0 - \epsilon, \xi_0)$ for some $\epsilon>0$. For any $\kappa >0$ there is an $n >N$ such that $\xi \in [\kappa, \xi_0)$ satisfy $\zeta = \phi^{\circ n}(\xi) \in (\xi_0 - \epsilon, \xi_0)$ since the sequence $\phi^{\circ n}(\xi)$ is monotone and converges to $\xi_0$ from below. Otherwise for the first $N$ such that $\phi^{\circ N}(\xi) \ge \phi^{\circ N+1}(\xi)$ we have all $n \ge N$ $\phi^{\circ n}(\xi) \le \phi^{\circ N}(\xi) < \phi^{\circ N}(\xi_0) = \xi_0$ which implies $\phi^{\circ n} (\xi) \not \to \xi_0$. This is in contradiction to the hypothesis of the theorem.  

Now we do a little trick. $\phi^{\circ n}(\phi^{\circ \delta}) = \phi^{\circ \delta}(\phi^{\circ n})$ so that their derivatives satisfy  $\D{\xi}{}\phi^{\circ n}(\phi^{\circ \delta}(\xi))=\D{\xi}{}\phi^{\circ \delta}(\phi^{\circ n}(\xi))= \Big{(}\D{\zeta}{}\phi^{\circ \delta}(\zeta) \Big{)} \phi ' (\phi^{\circ n-1}) \phi' (\phi^{\circ n-2}) \cdots \ge 0$. This tells us performing the operation the other way

$$\D{\xi}{} \phi^{\circ n} (\phi^{\circ \delta}(\xi)) = \phi'(\phi^{\circ n-1}(\phi^{\circ \delta})) \phi'(\phi^{\circ n-2}(\phi^{\circ \delta})) \cdots \D{\xi}{}\phi^{\circ \delta}(\xi) \ge 0$$

and since all the terms on the left are positive or zero and the whole product is positive or zero, $\D{\xi}{}\phi^{\circ \delta}(\xi) \ge 0$ for $\xi \in [\kappa,\xi_0)$ where $\kappa$ is arbitrary. Now $\phi^{\circ k\delta}(\xi)$ is an increasing sequence in $k$ approaching $\xi_0$, implying $\phi^{\circ \delta}(\xi) > \xi$. Otherwise, if it were not increasing $\phi^{\circ \delta}(\xi) \le \xi$ and $\phi^{\circ k\delta}(\xi) \le \phi^{\circ (k-1)\delta}(\xi) < \xi_0$ and $\phi^{\circ k\delta}$ does not approach $\xi_0$. Therefore $\phi^{\circ \delta + t} (\xi) = \phi^{\circ \delta}(\phi^{\circ t}(\xi)) > \phi^{\circ t}(\xi)$ for all $\delta> 0$ and $\xi \in (0,\xi_0)$, therefore $\D{t}{}\phi^{\circ t}(\xi) \ge 0$. It also tells us $0 < \phi^{\circ t}(\xi) < \xi_0$ for all $\xi \in (0,\xi_0)$ and $t \in \mathbb{R}^+$. This gives the result.

\end{proof}

With this we are tempted to say the problem is solved. Now that we know $\alpha \up^n x$ will be monotone and bounded it will have an attracting fixed point $\omega$ such that $\alpha \up^n \omega = \omega$. Since it is attracting and its derivative is greater than or equal to zero we know that $0\le\D{\xi}{}\Big{|}_{\xi = \omega} \alpha \up^n \xi \le 1$. We will have some values $\alpha$ that cannot be iterated. But we fix this by further adding that the values $\alpha$ where $0<\D{\xi}{}\Big{|}_{\xi = \omega} \alpha \up^n \xi < 1$ will be dense in $(1,e^{1/e})$ and everywhere we could not iterate can be arbitrarily approximated uniformly with values $\alpha$ that could be iterated. This devises a solution for $\alpha \up^{n+1} z$ for all $\alpha \in [1,e^{1/e}]$.

\begin{theorem}\label{thm4}
Let $1 \le \alpha \le e^{1/e}$ and $n\ge0$. Define the following holomorphic functions recursively for $z,w \in \mathbb{C}$, $\Re(z) > 0$ and $n \in \mathbb{N}$ with $\alpha \up^0 z = \alpha\cdot z$,
\begin{eqnarray*}
\vartheta_n(w) &=& \sum_{k=0}^\infty \big{(}\alpha\up^{n}\alpha\up^{n}...(k+1)\,times...\up^{n}\alpha\big{)} \frac{w^k}{k!}\\
\alpha \up^{n+1} z &=& \D{w}{z-1}\Big{|}_{w=0} \vartheta_n(w)
\end{eqnarray*}
then,
\begin{enumerate}
\item $\alpha \up^{n} :\mathbb{R}^+ \to \mathbb{R}^+$ and $\alpha \up^n (\alpha \up^{n+1} x) = \alpha \up^{n+1}(x+1)$
\item $\alpha \up^{n} x$ is real analytic in $\alpha$ for $1<\alpha < e^{1/e}$
\item $\D{x}{}\alpha \up^{n} x \ge 0$
\item $\alpha \up^n 0^+ = 1$ for $n\ge 1$
\end{enumerate}
\end{theorem}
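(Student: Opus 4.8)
The plan is to induct on $n$, exploiting the observation already recorded in this section that the assignment $f_n \mapsto f_{n+1}$ with $f_{n+1}(z) = f_n^{\circ z}(1)$ automatically reproduces the hyper-operator recursion, and that Lemma \ref{lm11} keeps the class of monotone, bounded, real-positive functions with an attracting real fixed point stable under this assignment. For the base cases: $\alpha\up^0 z = \alpha z$ is trivially increasing, real-analytic in $\alpha$, and maps $\mathbb{R}^+\to\mathbb{R}^+$; for $n=1$ one computes $\vartheta_0(w)=\alpha e^{\alpha w}$ and recovers $\alpha\up^1 z = \alpha^z$ from the factorization Lemma \ref{lm3} (since $|\alpha^z|=\alpha^{\Re(z)}$ is bounded in the imaginary direction), giving (1)--(4) at once for $\alpha\ge 1$; and the passage to $\alpha\up^2$ is precisely Theorem \ref{thm3}, whose fixed point $\beta$ and the membership $1\in\A_\beta$ are furnished by Lemma \ref{lm8} and Lemma \ref{lm9}.

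For the inductive step I would set $\phi := \alpha\up^n$ and treat it as the function being iterated. By the inductive hypothesis $\phi:\mathbb{R}^+\to\mathbb{R}^+$ is increasing with $\phi(0^+)=1$ and $\phi(1)=\alpha\up^n 1=\alpha>1$, and (for $n\ge 2$) is bounded above. First I would locate a fixed point: $\phi(x)-x$ is positive near $0$ and eventually negative, so the intermediate value theorem yields $\omega\in(1,e)$ with $\phi(\omega)=\omega$, and since $\phi$ is increasing its graph lies above the diagonal on $(0,\omega)$, forcing every $x\in(0,\omega)$, in particular $x=1$, to iterate monotonically up to $\omega$. This establishes $(0,\omega)\subset\A_\omega$ and $1\in\A_\omega$, so that the hypotheses of Lemma \ref{lm11} hold once the multiplier obeys $0<\phi'(\omega)<1$. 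Invoking Theorem \ref{thm2} then defines $\phi^{\circ z}(\xi)$ on $\mathbb{C}_{\Re(z)>0}\times\A_\omega$, and I set $\alpha\up^{n+1}z=\phi^{\circ z}(1)$. With this definition the recursion (1) is the semigroup identity $\phi(\phi^{\circ x}(1))=\phi^{\circ x+1}(1)$; the image and the monotonicity (3) come directly from $\phi^{\circ t}:\mathbb{R}^+\times(0,\omega)\to(0,\omega)$ and $\D{t}{}\phi^{\circ t}\ge 0$ of Lemma \ref{lm11}, which simultaneously keeps $\alpha\up^{n+1}$ bounded inside $(1,\omega)\subset(1,e)$ and so feeds the next induction step; and (4) follows from $\phi^{\circ 0}=\mathrm{id}$, i.e.\ the limiting identity $e^{i\theta}\int_0^\infty\vartheta_n(-e^{i\theta}w)\,dw=1$ extracted inside the proof of Theorem \ref{thm2}.

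The first genuine obstacle is the strictness and non-degeneracy of the multiplier $\phi'(\omega)$. Monotone approach to $\omega$ already forces $0\le\phi'(\omega)\le 1$, but the machinery of Theorem \ref{thm2} requires the open condition $0<\phi'(\omega)<1$, and this fails at a neutral fixed point $\phi'(\omega)=1$, which occurs at the endpoint $\alpha=e^{1/e}$ (there $\beta=e$, $\ln\beta=1$) and possibly at interior parameters. Here I would run the construction only on the set $\{\alpha:0<\D{\xi}{}\Big{|}_{\xi=\omega}\,\alpha\up^n\xi<1\}$, argue it is dense in $(1,e^{1/e})$, build $\alpha\up^{n+1}$ for those good $\alpha$, and then extend to the remaining parameters and to the endpoints $\alpha=1,\,e^{1/e}$ by uniform approximation, transferring the recursion and monotonicity to the limit.

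The second obstacle is property (2), real-analyticity in $\alpha$. I would read off the explicit $\G$-normalized differintegral formula of Theorem \ref{thm2}, note that each coefficient $\alpha\up^n\cdots\up^n\alpha$ of $\vartheta_n$ is real-analytic in $\alpha$ by the inductive hypothesis, and upgrade to joint analyticity by Weierstrass once I control the convergence of $\vartheta_n$ together with its transform uniformly on compact $\alpha$-sets. I expect the principal difficulty to be exactly this uniform control of the tail $\int_1^\infty$ as $\alpha$ varies, coupled with the density and uniform-approximation bookkeeping needed to reach the neutral parameters and the closed interval $[1,e^{1/e}]$; the rest of the argument is a fairly mechanical application of Lemma \ref{lm11} and Theorem \ref{thm2} at each stage of the induction.
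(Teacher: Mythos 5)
Your proposal follows essentially the same route as the paper's own proof: induction on $n$ with Section~\ref{sec4} supplying the base case, construction of the attracting fixed point $\omega_\alpha\le e$ with $1\in\A_{\omega_\alpha}$, Lemma~\ref{lm11} and Theorem~\ref{thm2} to define $\alpha\up^{n+1}z=(\alpha\up^n)^{\circ z}(1)$, a case split on the multiplier with density of the good parameters and uniform approximation to handle $\phi'(\omega_\alpha)\in\{0,1\}$, and explicit tail estimates on the $\G$-normalized formula for analyticity in $\alpha$. The two obstacles you single out are exactly the two places where the paper spends its effort, and your proposed resolutions match its arguments.
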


\begin{proof}
The proof of this theorem goes by induction. We assume we have a solution for $\alpha \up^n z$ and we will show this admits a solution for $\alpha \up^{n+1} z$. For the cases $\up^0, \up$ the result is shown, arbitrary because each of these are exponentially bounded and can be factored. For $\up^2$ we have shown the result in Section \ref{sec4}, for convenience we will start with $\up^2$ as the base case. We give additional conditions $\alpha \up^n z$ satisfies--for which we will show $\alpha \up^{n+1} z$ satisfies as well. 

We want $\alpha \up^n (x + \delta) > \alpha \up^{n} x$ for $\delta, x \in \mathbb{R}^+$--that the hyper-operators have monotone growth in the second argument. We know this is true for tetration $\up^2$ by Lemma \ref{lm11}, so we have shown the base step of induction. We also know that tetration converges to a point as we move along the real line and that this point is less than or equal to $e$--we will impose this condition on all hyper-operators greater than exponentiation, so that $\alpha \up^n x \to L$ for some positive real number $L\le e$ as $x \to \infty$. We will also assume that $\alpha \up^n x$ is real analytic in $\alpha$; although we have not shown this for tetration $\up^2$ we will give a proof below that suffices. We start by showing we can iterate $\alpha \up^n \xi$ in $\xi$ which will give us the holomorphic candidate $\alpha \up^{n+1} z$, of which we will show satisfies (1) through (4). 

First define the sequence $F_k(x) = \alpha \up^n ( \alpha \up^n (...(k \, times)...(\alpha \up^n x)$ which as $k \to \infty$ we have $F_k(0) \to \omega_\alpha \le e$. This follows because $F_k(0)$ is a bounded and monotone increasing sequence. To show this, $0 < F(0^+) = 1$ so $F(0) < F(F(0)) < F(F(F(0)))<...<F_k(0)$, and $F_k$ is bounded since $F_k(0) < F_k(\infty) = L$. Therefore $F_k(0) \to \omega_\alpha$ as $k \to \infty$.  We know that $\omega_\alpha$ is a fixed point of $F$, since $F(\omega_\alpha) = F (\lim_{k\to \infty} F_k) = \lim_{k\to\infty} F_{k+1} = \omega_\alpha$. Notice that for $0<x < \omega_\alpha$ we have $F_k(0) < F_k(x) < \omega_\alpha$, and therefore, $\lim_{k\to \infty} F_k(x) \to \omega_\alpha$ for $x \in (0,\omega_\alpha)$.

Now we know $0\le F'(\omega_\alpha) = \D{\xi}{}\Big{|}_{\xi = \omega_\alpha}\alpha\up^n\xi \le 1$ through the following argument. The function $\alpha \up^n x$ is increasing on $\mathbb{R}^+$ and therefore its derivative is positive or zero. We know that $\D{\xi}{}\Big{|}_{\xi = \omega_\alpha}\alpha\up^n\xi  \le 1$ because if it were the case that $\D{\xi}{}\Big{|}_{\xi = \omega_\alpha}\alpha\up^n\xi > 1$ then $F_k(x) \not\to \omega_\alpha$--the fixed point would be repelling, it would send a neighborhood about $\omega_\alpha$ outside of itself \cite{ref2}.

We go by cases now, assume $0<F'(\omega_\alpha) < 1$.  We want to show that $1 \in \A_{\omega_\alpha}$. This follows by noting $x \in (0,\omega_\alpha)$ satisfies $\lim_{k \to \infty} F_k(x) = \omega_\alpha$, and there is an open ball about each of these points in the complex plane that satisfy this. We add that $1 \in (0,\omega_\alpha)$ because $\alpha \up^n \alpha \up^n  0 = \alpha$ and $1<\alpha < \omega_\alpha$. This implies $1$ can be connected to $\omega_\alpha$ with open balls whose elements $\xi$ satisfy $\lim_{k\to \infty } F_k(\xi)\to \omega_\alpha$ which implies $1 \in \A_{\omega_\alpha}$. 

Although we do not know if $\A_{\omega_\alpha} \subset \mathbb{C}_{\Re(z)>0}$ our iteration techniques can be applied and we may factor the complex iterates of $F(\xi) = \alpha \up^n \xi$ about one, but we have no knowledge of where $F^{\circ z}(1)$ lives. To visualize what is going on and how we are talking about the function $\up^{n+1}$; imagine again the simply connected set $B$ from Lemma \ref{lma4}. Successive iterations of $\up^n$ $m$ times about $1$ close to this ball and the iterates are well defined here. We state merely that $\alpha\up^{n+1} z$ is an analytic continuation of $\alpha \up^n \alpha \up^n \alpha \up^n ...(m \,times)... \alpha \up^{n+1} z$, where eventually the values live inside of $B$ and we can speak of the domain this function lives in. By analytic continuation this recursion will hold for $z\in\mathbb{R}^+$ since $\alpha \up^n,\alpha \up^{n+1}:\mathbb{R}^+\to\mathbb{R}^+$. To state this rigorously we say the following.

Take $\Re(z) \ge \kappa > 0$ and $m \in \mathbb{N}$ chosen so that $|(\alpha \up^{n+1} (z+m)) - \omega_\alpha| < \epsilon$ which is allowed since $\alpha \up^{n+1}(z+m)$ uniformly tends to $\omega_\alpha$ as $m \to \infty$. Then for $k \in \mathbb{N}$, $\alpha \up^{n} (\alpha \up^{n+1} (k + m)) = \alpha \up^{n+1} (k+m+1)$. This implies, since both functions equal on the natural numbers and they satisfy our factoring bounds, they must equal everywhere. We now have that $\alpha \up^{n} (\alpha \up^{n+1} (z+m)) = \alpha \up^{n+1} (z+m+1)$--where $\Re(\alpha \up^{n+1} (z+m))>0$ since $\alpha \up^{n+1} (z+m)$ is in an $\epsilon$-radius of $\omega_\alpha$ so this is well defined. This implies $\alpha \up^n (\alpha \up^{n+1} z) = \alpha \up^{n+1} (z+1)$ for $\Re(z) > m$. Now by analytic continuation, since $\alpha \up^{n}: \mathbb{R}^+ \to \mathbb{R}^+$ and $\alpha \up^{n+1}: \mathbb{R}^+ \to \mathbb{R}^+$, we know that their composition is well defined and for $x \in \mathbb{R}^+$, $\alpha \up^{n} (\alpha \up^{n+1} x) = \alpha \up^{n+1}( x  +1)$ which shows (1). Since the range and domain that hyper-operators have are rough to analyze, we stick to this as the culminating result--the recursive property is satisfied on the real positive line.

We show (2), that $\alpha \up^{n+1} x$ is real analytic in $\alpha$ by breaking the expression into pieces. Take $h_N(\alpha) = \sum_{k=0}^N \big{(} \alpha \up^n ...(k+1\,times)... \up^n \alpha \big{)} \frac{(-1)^k}{k!(1+k-x)}$ which is analytic in $\alpha$. Fix $x$ and choose $N$ big enough such that $\sum_{k=N+1}^\infty  |\frac{1}{k!(k+1-x)}| < \epsilon/e$. We note that $\alpha \up^{n} \alpha \up^n ... (k \, times)...\up^n \alpha \le e$ which follows because $\alpha \up^n x \to L\le e$ as $x \to \infty$ for $x \in \mathbb{R}^+$ and $\alpha \up^n x$ is monotone in $x$.
\begin{eqnarray*}
|h - h_N| &\le& \sum_{k=N+1}^\infty \big{(} \alpha \up^n ...(k+1\,times)...\up^n \alpha\big{)} \frac{1}{k!|1+k-x|}\\
&\le& e\sum_{k=N+1}^\infty \frac{1}{k!|1+k-x|}\\
&<& \epsilon
\end{eqnarray*}

Now define $p_N(\alpha,w) = \sum_{k=0}^N (\alpha \up^n ...(k+1\,times)...\up^n\alpha) \frac{(-w)^k}{k!}$ analytic in $\alpha$. Let $N$ be chosen such that $\sum_{k=N+1}^\infty \frac{|w|^k}{k!} < \epsilon/e$.

\begin{eqnarray*}
|p - p_N| &<& \sum_{k=N+1}^\infty \big{(} \alpha \up^n ...(k+1\,times)...\up^n \alpha\big{)} \frac{|w|^k}{k!}\\
&<& \sum_{k=N+1}^\infty e \frac{|w|^k}{k!}\\
&<& \epsilon
\end{eqnarray*}

Now take $\int_1^N p(\alpha,w)w^{x-1}\,dw$ which converges uniformly to \\$\int_1^\infty p(\alpha,w)w^{x-1}\,dw$ in $\alpha$. Create the function $g(w) = \sup_{\alpha - \kappa \le q \le \alpha + \kappa} |p(q,w)|$ for some $\kappa > 0$ where $0<\D{\xi}{}\Big{|}_{\xi = \omega_q} q \up^n \xi < 1$. An interval of $q$ that satisfy this is possible because there are only a finite number of $\alpha$ such that $\D{\xi}{}\Big{|}_{\xi = \omega_\alpha} \alpha \up^n \xi = 0,1$ (we will show this below). The function $g$ has a convergent differintegral, take $\int_N^\infty g(w)w^{\sigma-1}\,dw < \epsilon$ and uniform convergence follows.  This implies $\frac{1}{\G(1-x)} (h(\alpha) + \int_1^\infty p(\alpha,w)w^{x-1}\,dw) = \alpha \up^{n+1} x$ is analytic in $\alpha$. We note that $\lim_{x \to \infty} \alpha \up^{n+1} x \to \omega_\alpha < L \le e$

Now we assume that $F'(\omega_\alpha) = \D{\xi}{}\Big{|}_{\xi = \omega_\alpha} \alpha \up^n \xi = 0,1$. We will analyze the function $\omega_\alpha$ in $\alpha$. This function is implicitly defined by the set $$\{x \in \mathbb{R}^+|\alpha \in (1,e^{1/e})\,(\alpha \up^n x) - x = 0\}$$ by the analytic implicit function theorem we have that $\omega_\alpha$ is analytic in $\alpha$ away from its critical points, where ever $\D{x}{}\alpha \up^{n} x \neq 1$. However this function is bounded, so where it is ill-defined a limit will exist but its derivative will blow up. Consequently it will not be analytic at this point, but its limiting value will exist. Now define the set $\mathcal{H} = \{ \alpha \in (1,e^{1/e}) | F'(\omega_\alpha) = 0,1\}$. We know that $\mathcal{H}$ is closed. $\mathcal{H}$ has no limit points by a short proof. If otherwise $F'(\omega_\alpha) = \D{\xi}{}\Big{|}_{\xi = \omega_\alpha}\alpha \up^n \xi = 1,0$ for all $\alpha$ by the identity theorem. This contradicts the analyticity of $\omega_\alpha$ and $\alpha \up^n x$ and the way these were defined.

Now we know that there exists $a_j \in (1,e^{1/e})$ such that  $\lim_{j \to \infty} a_j = \alpha$ and $0 < \D{\xi}{}\Big{|}_{\xi = \omega_{a_j}} a_j \up^n \xi < 1$. We consider the functions $a_j \up^{n+1} z$ which are holomorphic by the first case, and which we show as $j \to \infty$ the limit uniformly converges in $z$. Notice $|a_j \up^{n+1} k - a_i \up^{n+1} k| < \epsilon$ for $j,i>J$ where $J$ can be chosen for all $k$ since $a_j \up^{n+1} k \to \omega_{a_j}$ as $ k \to \infty$ and $|\omega_{a_j} - \omega_{a_i}|<\epsilon$ for big enough $J$. This implies the function $\vartheta_{j}(w) = \sum_{k=0}^\infty a_j \up^{n+1} k \frac{w^k}{k!} \to \vartheta(w)$ uniformly on $\mathbb{C}$.

Let $F_j(y) = a_j \up^n y$. Observe $F_j^{\circ 0}(1) = 1$ and by pulling apart the integral expression and recalling that the transform converges in sectors of $\mathbb{C}$:

$$\lim_{z\to 0} F_j^{\circ z}(1) = 1 = e^{i\theta}\int_0^\infty \vartheta_j(-e^{i\theta} w)\,dw$$

so that as $j \to \infty$ we must have $e^{i\theta} \int_0^\infty \vartheta(-e^{i\theta}w)\,dw = 1$, which implies by Theorem \ref{thmDiff} that $\vartheta(w)$ is differintegrable. This implies $\alpha \up^{n+1} z$ is a holomorphic function for $0 < \Re(z) < 1$. We can analytically continue this function in the same manner as we did in Lemma \ref{lm2}.

We now have $\alpha \up^{n+1} z$, of which the recursive property is satisfied for $x \in \mathbb{R}^+$ since  $\alpha \up^n (\alpha \up^{n+1} x) = \lim_{j \to \infty} a_j \up^n (a_j \up^{n+1} x) =  \lim_{j\to\infty} a_j \up^{n+1} (x+1) = \alpha \up^{n+1} (x+1)$ which shows (1). This function is monotone increasing in $x$ which shows (3), and it is also holomorphic in $z$. The limiting function is analytic in $\alpha$ by the proof we gave above, implying (2) for all $\alpha \in (1,e^{1/e})$. $\lim_{x\to 0^+} \alpha \up^{n+1} x = \lim_{x\to 0^+} \lim_{j \to \infty} a_j \up^{n+1} x = 1$ which shows (4). All this is sufficient to show that hyper-operators at $e^{1/e}$ are well defined, $e^{1/e} \up^n z$ is a holomorphic function in $z$. This implies the hyper-operators are defined for bases $1\le\alpha\le e^{1/e}$. 
\end{proof}

\section{Final Remarks}

We close hoping the reader has seen the connection between our differintegral and iteration. We have tried to be as precise as possible, and hope each theorem was as clear as it needed to be. Looking forward, we ask what other recursive relationships our differintegral can recover. If the reader has cared to notice the generality--what other linear operators can we fractionally iterate with our auxiliary function and fractional calculus? The process of factoring a function by its values on $\mathbb{N}$ that would otherwise be inexpressable provides a solution to some difficult problems; hyper operators and fractional composition being one. We are aware these results extend further than what we have written. In particular we have devised a way, where instead of iterating the linear operator $\mathcal{C}_\phi f = f \circ \phi$, we iterate the linear operator $\bigtriangledown_z f = f(z) - f(z-1)$. This problem is solved using techniques that vary only in subtlety and remain true to using fractional calculus and our auxiliary function $\vartheta$. This problem poses much less of a challenge, however. 

Turning our eyes to iteration particularly, there exists a solution to complex iterates of functions at positive repelling fixed points ($\phi'(\xi_0)> 1$), but the proof requires more work and more complicated theorems from complex dynamics. We do not know whether a solution to iterating functions about fixed points whose derivatives are complex numbers at that fixed point exists using our differintegral $\phi'(\xi_0) \in \mathbb{C}_{|z|<1}$, but we are aware that the method will work in restricted cases.

A problem the author has made much headway on, that for reasons of space was left untouched by this paper, is the iteration of iteration. Phrased less poetically: does there exist an analytic function $\alpha \up^s x$ for $s > 0$? In such a sense we ask whether we can construct $\alpha \up^{1/2} x$ and $\alpha \up^{3/2} x$ such that $\alpha \up^{1/2} (\alpha \up^{3/2} x) = \alpha \up^{3/2} (x+1)$. Do there exist semi-operators between multiplication and exponentiation? The problem requires a whole new approach to the methods developed however and would double the length of this paper. Though not solved, the problem appears to be solvable using fractional calculus and the methods developed in this paper.

We may have glazed over a few of the subjects required in the solution of this problem but we act with the finest care in hoping it was clear. The solution of bounded analytic hyper operators offers many questions and we have quite a few interesting ideas which arise from them alone. This is merely a glimpse of the importance of Ramanujan's master theorem in the area of recursion and iteration. We have found different examples of our methods at work but they are less striking than this one, plus the theory of fractional composition and hyper-operators has always been a subject of great interest. 

\bibliographystyle{amsplain}

\end{document}